\numberwithin{equation}{section}
\newtheorem{theorem}{Theorem}[section]
\newtheorem{lemma}[theorem]{Lemma}
\newtheorem{proposition}[theorem]{Proposition}
\theoremstyle{definition}
\newtheorem{remark}[theorem]{Remark}
\newcommand{\M}{{\mathcal M}}
\newcommand{\R}{\mathbb R}
\newcommand{\A}{\mathcal A}
\newcommand{\B}{\mathcal B}
\begin{document}

\title[]{The behavior of the bounds of matrix-valued maximal inequality in $\mathbb{R}^n$ for large $n$}

\author{Guixiang Hong}
\address{Instituto de Ciencias Matem\'aticas,
CSIC-UAM-UC3M-UCM, Consejo Superior de Investigaciones
Cient\'ificas, C/Nicol\'as Cabrera 13-15. 28049, Madrid. Spain.\\
\emph{E-mail address: guixiang.hong@icmat.es}}

\thanks{\small {{\it MR(2000) Subject Classification}.} Primary
46L51; Secondary 42B25.}
\thanks{\small {\it Keywords.}
Matrix-valued $L_p$-spaces, Hardy-Littlewood maximal inequalities,
Dimension free estimates.}
\thanks{\ This work has been supported by MINECO: ICMAT Severo Ochoa project SEV-2011-0087.}

\maketitle

\begin{abstract}
In this paper, we study the behavior of the bounds of matrix-valued
maximal inequality in $\mathbb{R}^n$ for large $n$. The main result of this
paper is that the $L_p$-bounds ($p>1$) can be taken to be
independent of $n$, which is a generalization of Stein and
Str\"omberg's resut in the scalar-valued case. We also show that the
weak type $(1,1)$ bound has similar behavior as Stein and
St\"omberg's.
\end{abstract}

\section{Introduction}
Let $(X,d,\mu)$ be a metric measure space and $\B(\ell_2)$ the
matrix algebra of bounded operators on $\ell_2$. For a locally
integrable $\B(\ell_2)$-valued function $f$, we define
$$f_r(x)=\frac{1}{\mu(B(x,r))}\int_{B(x,r)}f(y)d\mu(y),$$
where $B(x,r)=\{y\in X:\;d(x,y)\leq r\}$.

We shall study the weak type $(1,1)$ norm of the maximal operator,
defined to be the least quantity $c_1$ such that for all $f\in
L^+_1(X; S_1)$, all $\lambda>0$, there exists a projection
$e\in\mathcal{P}(L_{\infty}(X)\bar{\otimes} \B(\ell_2))$ satisfying
\begin{align}\label{1 estimates}
ef_re\leq\lambda,\;\forall
r>0\;\;\mathrm{and}\;\;\mathrm{tr}\otimes\int e^{\perp}\leq
\frac{c_1\|f\|_1}{\lambda}.
\end{align}
Here $L_p(X;S_p)$ denotes the noncommutative $L_p$ spaces associated
with von Neumann algebra $\A=L_{\infty}(X)\bar{\otimes} \B(\ell_2)$,
which is the weak closure of the algebra formed by essentially
bounded functions $f:\;X\;\rightarrow\;\B(\ell_2)$. $L^+_p(\A)$ is
the positive part of $L_p(\A)$. $\mathcal{P}(\A)$ denotes the set of
all projections in $\A$.

Analogously to (\ref{1 estimates}), the strong $(p,p)$ norm of the
maximal operator is defined to be the least quantity $c_p$ such that
for all $f\in L^+_p(\A)$, there exists $F\in L^+_p(\A)$ satisfying
\begin{align}\label{p estimates}
f_r\leq F,\;\forall r>0\;\;\mathrm{and}\;\;\|F\|_p\leq c_p\|f\|_p.
\end{align}

In the scalar-valued case, i.e. replacing $\B(\ell_2)$ by complex
numbers $\mathbb{C}$, $c_1$ and $c_p$ are reduced to be the weak
$(1,1)$-boundedness and $L_p$-boundedness of the Hardy-Littlewood
maximal function
$$M(f)(x)=\sup_{r>0}\frac{1}{|B(x,r)|}\int_{B(x,r)}|f(y)|d\mu(y).$$
This maximal function seems no available for matrix-valued function
since we can not compare any two matrices or operators, which is one
source of difficulties in the noncommutative analysis. The obstacle
has been successfully overcome by the interaction with operator
space theory. For instance, Junge in \cite{Jun02} formulated
noncommutative Doob's maximal inequality using Pisier's theory of
vector-valued noncommutative $L_p$-space \cite{Pis98}. Later, in
\cite{JuXu06}, Junge and Xu developed a quite involved
noncommutative version of Macinkiewiz interpolation theorem.
Together with Yeadon's weak type $(1,1)$ maximal ergodic inequality,
the interpolation result enable them to establish a noncommutative
analogue of the Dunford-Schwartz maximal ergodic inequality. The
noncommutative Stein's maximal ergodic inequality has also been
obtained in the same paper.

Inspired by the maximal inequalities established in the theory of
noncommutative martingale and in the ergodic theory, Mei in
\cite{Mei07} considered the operator-valued Hardy-Littlewood maximal
inequality in $\R^n$. He made use of the geometric property of
$\R^n$ to reduce Hardy-Littlewood maximal inequality to several
operator-valued martingale inequalities, which can be viewed as
Junge's noncommutative Doob's maximal inequality or Cuculescu's weak
type $(1,1)$ inequality for noncommutative martingales. Mei's
inequality is exploited by Chen, Xu and Yin in \cite{CXY} to prove
maximal inequalities associated to the integrable rapidly decreasing
functions.

The reduction method in Mei's arguments inevitably yields that the
constants grow exponentially in $n$, the dimension of the base space
$\R^n$. However, it is well known that the constants $c_p$ when
$p>1$ can be taken to be independent of $n$ in the scalar-valued
case. The first result on this topic belongs to Stein \cite{Ste82}
(see also the appendix of \cite{StSt83}), which asserts that when
$X$ is the $n$-dimensional Hilbert space and $\mu$ is Lebesgue
measure, $c_p$ ($p>1$) can be taken to be independent of $n$. For
general $n$-dimensional normed spaces, Bourgain \cite{Bou861}
\cite{Bou86} and Carbery \cite{Car86} proved that $c_p\leq
C(p)<\infty$ provided $p>3/2$. It is unknown whether or not there is
some $1<p<3/2$ for which there exist $n$-dimensional normed spaces
$X_n$ such that $c_p$ are unbounded.  Bourgain in \cite{Bou87} showed that
$c_p\leq C(p,q)<\infty$ for all $p>1$ when $X=\ell^n_q$ and $q$ is
an even integer, which was extended by M\"uller to $X=\ell^n_q$ for
all $1\leq q<\infty$. Finally, Bougain \cite{Bou12} proved that $c_p<\infty$
for all  $X=\ell^n_{\infty}$. We refer the readers to the Introduction of
\cite{NaTa09} for an overall review of the related results.

A dimension independent bound on $c_p$ would mean that the
operator-valued Hardy-Littlewood maximal inequality is in essence an
infinite dimensional phenomenon. In the scalar-valued case, Stein's
dimension independent bound on $c_p$ ($p>1$) has been exploited by
Ti$\mathrm{\check{s}}$er in \cite{Tis88} to study differentiation of
integrals with respect to certain Gaussian measures on Hilbert
space. Therefore, it is reasonable for us to expect a similar
application of the dimension independent bounds in the
operator-valued case. Moreover, even though many operator-valued
results are motivated by quantum analysis or probability (see e.g.
\cite{Mei07}, \cite{HoYi}, \cite{Par09}, \cite{MePa09},
\cite{HSMP}), some of them are inversely used to study analysis on
some noncommutative structures. For instance, in \cite{CXY}, the
authors studied harmonic analysis on quantum torus through
operator-valued harmonic analysis by transference technique; Junge
Mei and Parcet \cite{JMP1} reduced the analysis on the Fourier
multiplier on discret group von Neumann algebras to operator-valued
results through Junge's cross product techniques. Hence there would
exist some applications of the dimension independent operator-valued
results to the analysis on some noncommutative structures. Last but
not least, the dimension free results are particularly of interest
in the noncommutative analysis, since our research object is of
infinite dimension such as a von Neumann algebra.

In this work, as the first attempt, we restrict us to study the
behavior of operator-valued maximal inequality on $n$-dimensional
Hilbert space equipped with Lebesgue measure. An underlying
principle is that even though there are many difficulties in
transferring classical results to the  operator-valued setting (or
even noncommutative setting), the metric or geometric properties of
the defined spaces may interplay well with the noncommutativity of
the range spaces, as happened in \cite{Mei07}, \cite{HoYi},
\cite{Par09}. The first result in the paper is on the estimates of
$c_1$.
\begin{theorem}
Let $f\in L^+_1(\A)$. Then for any $\lambda>0$, there exists a
universal constant $C$ and a projection $e\in
\mathcal{P}(\mathcal{A})$ such that
$$ef_re\leq \lambda,\;\forall r>0\;\;\mathrm{and}\;\;\mathrm{tr}\otimes\int e^{\perp}\leq \frac{Cn\|f\|_1}{\lambda}.$$
\end{theorem}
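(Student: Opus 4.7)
The strategy is to adapt Stein and Str\"omberg's classical proof of the scalar weak-$(1,1)$ bound $Cn$ (appendix of \cite{StSt83}) to the operator-valued setting. Their scalar argument uses the polar decomposition of the ball average to extract a factor $n$ from the $s^{n-1}$ Jacobian, reducing matters to a spherically averaged one-dimensional maximal operator which admits a weak-$(1,1)$ bound with a universal constant. In the operator-valued setting, the pointwise level-set argument is replaced by a Cuculescu-style projection construction, but the underlying geometric identity and the linearity of the averaging in the direction variable are both preserved.

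Concretely, the plan has three steps. First, polar coordinates give $f_r(x)=\frac{n}{r^n}\int_0^r s^{n-1}A_s f(x)\,ds$, where $A_s f$ is the (operator-valued) spherical average of $f$ at radius $s$ around $x$. Since $f\geq 0$ and $s\leq r$, this yields the operator-valued pointwise bound $f_r\leq n\,\bar M_r f$, where $\bar M_r f = f\ast K_r$ with radial probability kernel $K_r(z)=\frac{1}{|S^{n-1}|\,r\,|z|^{n-1}}\mathbf{1}_{|z|\leq r}$. Second, for the averaged family $\bar M_r f$ --- which by Fubini is a spherical average of the one-sided one-dimensional averages $\frac{1}{r}\int_0^r f(\cdot+s\theta)\,ds$ --- one constructs a single projection $e\in\mathcal{P}(\A)$ with $e\,\bar M_r f\,e\leq \lambda/n$ for every $r>0$ and $\mathrm{tr}\otimes\int e^\perp\leq Cn\|f\|_1/\lambda$. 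This is the operator-valued analogue of Stein-Str\"omberg's dimension-independent weak-$(1,1)$ bound for $\sup_r\bar M_r f$; it should be obtained by embedding all the directional one-dimensional maximal functions into a single noncommutative martingale (via a shifted dyadic system on $\R$ combined with Fubini in the transverse variables) and applying Junge-Cuculescu's projection construction once. Third, combining the pointwise bound $f_r\leq n\,\bar M_r f$ with $e\,\bar M_r f\,e\leq \lambda/n$ yields $e\,f_r\,e\leq \lambda$ for every $r>0$, after a harmless rescaling of $\lambda$.

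The hardest step is the second one: constructing a single projection $e$ that simultaneously handles all radii $r$ and a continuum of directions $\theta\in S^{n-1}$ without an exponential blow-up from discretization. A naive $\epsilon$-net on $S^{n-1}$ would introduce exponentially many projections and ruin the linear-in-$n$ bound. The resolution is to express the averaged maximal operator as (or dominate it by) a single noncommutative martingale maximal operator, so that Cuculescu's weak $(1,1)$ inequality yields the required bound in one stroke, rather than by a union bound over directions. Setting up such a master martingale so that it dominates $\bar M_r f$ for every $r>0$ while its associated $L_1$-data is controlled by a universal multiple of $\|f\|_1$ is the principal technical difficulty and is where the translation of the Stein-Str\"omberg geometric input from the scalar to the operator-valued setting must be made to work.
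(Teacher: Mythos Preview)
Your proposal diverges from the paper's proof and leaves the decisive step open. The paper does \emph{not} use the polar-coordinate reduction you attribute to Stein--Str\"omberg; their actual weak-$(1,1)$ argument (and the paper's) runs through the heat semigroup. One takes $S^t=T^t\otimes id_{\M}$ with $T^tg=g*h_t$, applies Yeadon's weak-$(1,1)$ maximal ergodic inequality to the time-averages $M_t(f)=\frac{1}{t}\int_0^t S^s f\,ds$, and then invokes the pointwise kernel comparison from \cite{StSt83},
\[
\frac{1}{|B(0,r)|}\,\chi_{B(0,r)}(y)\;\le\;Cn\cdot\frac{1}{t_r}\int_0^{t_r}h_s(y)\,ds,
\]
valid for every $r>0$ with a suitable $t_r$. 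This immediately gives $ef_re\le Cn\,eM_{t_r}(f)e\le Cn\eta$, and rescaling $\eta=\lambda/(Cn)$ finishes the proof. The factor $n$ enters only through the kernel estimate; there is no discretisation, no martingale, and no direction variable to worry about.

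Your route, by contrast, has a real gap at Step~2. You need a dimension-free weak-$(1,1)$ bound for $\sup_r\bar M_r f$, and the mechanism you propose---averaging the directional one-dimensional maximal operators $M^\theta f=\sup_r\frac{1}{r}\int_0^r f(\cdot-s\theta)\,ds$ over $\theta\in S^{n-1}$---does not deliver it: each $M^\theta f$ is only in weak $L_1$, and weak $L_1$ is not closed under convex averages, so the Fubini step fails already in the scalar case. The alternative you sketch, a ``master martingale'' absorbing all directions at once via shifted dyadic systems, is not carried out and faces a genuine obstruction: dyadic filtrations adapted to distinct directions are mutually incompatible, and no Cuculescu-type construction is known that handles a continuum of such filtrations with a uniform constant. (Note also that your kernel $K_r$ blows up at the origin, so it cannot be dominated pointwise by a heat-semigroup time average, which blocks the obvious repair.) You have correctly located the difficulty but not resolved it; the heat-semigroup argument simply bypasses it.
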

This result is a generalization of the one by Stein and
Str\"omberg. The main ingredient of the proof is Yeadon's noncommutative maximal ergodic theorem
\cite{Yea77}  (see also below Lemma \ref{yeadon's theorem}). One
will find a detailed proof in section 3.

The main result of this paper is the following dimension independent
estimates of $c_p$ for $p>1$.
\begin{theorem}\label{main theorem}
Let $1<p\leq\infty$ and $f\in L^+_p(\A)$. Then there exist a
constant $C_p$ which depends only on $p$ but not on $n$, and a
function $F\in L_p(\A)$ such that
$$f_r\leq F,\;\forall r>0\;\;\mathrm{and}\;\;\|F\|_p\leq C_p\|f\|_p.$$
\end{theorem}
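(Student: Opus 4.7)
The plan is to adapt Stein's dimension-free argument for the scalar Hardy-Littlewood maximal inequality on $\R^n$ to the $\B(\ell_2)$-valued setting. The strategy has two ingredients: a geometric comparison of the ball averages with a symmetric semigroup on $\R^n$ (where the dimension-freeness originates), and the noncommutative analog of Stein's maximal ergodic theorem for such a semigroup due to Junge and Xu \cite{JuXu06} (which delivers the $L_p$-boundedness for $p>1$).

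\textbf{Step 1.} Let $(T_t)_{t>0}$ be a symmetric diffusion semigroup on $\R^n$, for instance the heat semigroup $e^{t\Delta}$ or the Poisson semigroup $e^{-t\sqrt{-\Delta}}$, tensored with the identity on $\B(\ell_2)$ so as to act on $\B(\ell_2)$-valued functions. Following Stein, the first task is to prove that for every $r>0$, the ball average $f_r$ is dominated in the operator order by the semigroup maximal function, with a constant $C$ independent of $n$. Precisely, whenever $G\in L_p^+(\A)$ satisfies $T_tf\le G$ for all $t>0$, one should have $f_r\le CG$ for all $r>0$. This reduces, via the positivity of the scalar convolution kernels, to the scalar comparison of Stein, proved by a $g$-function/square-function computation in which the radial Fourier profiles of the ball-average and semigroup kernels combine to yield an $L^2$-constant independent of $n$.

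\textbf{Step 2.} The tensor-extended semigroup $(T_t\otimes \mathrm{id}_{\B(\ell_2)})$ is a symmetric diffusion semigroup on the semifinite von Neumann algebra $\A$, so Junge-Xu's noncommutative maximal theorem \cite{JuXu06} produces, for every $1<p\le\infty$, a dimension-free constant $C_p$ and $G\in L_p^+(\A)$ with $T_tf\le G$ and $\|G\|_p\le C_p\|f\|_p$. Combined with Step 1, the function $F := CG$ dominates $\{f_r\}_{r>0}$ and satisfies $\|F\|_p\le CC_p\|f\|_p$, yielding $c_p\le CC_p$ with all constants independent of $n$.

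The main obstacle is Step 1. The naive pointwise comparison $|B(0,r)|^{-1}\mathbf{1}_{B(0,r)}(y)\le C\,K_{t(r)}(y)$ of the ball indicator with a single heat or Poisson kernel cannot hold with $C$ independent of $n$, because the normalizing constants of the ball volume and of the semigroup kernel scale differently in the dimension. Stein overcomes this by using the full maximal function of the semigroup together with a $g$-function identity that exposes the cancellations responsible for the uniform-in-$n$ bound. Transferring this square-function analysis to the operator-valued setting is the delicate point; a natural route is to establish the case $p=2$ first via Plancherel and spectral calculus (which only concerns radial Fourier multipliers in the spatial variable and is therefore unaffected by the noncommutativity of the range), then extend to $1<p\le\infty$ through the noncommutative Marcinkiewicz interpolation of Junge and Xu between the $L_2$ bound and the trivial $L_\infty$ bound, using duality for $1<p<2$.
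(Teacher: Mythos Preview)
Your proposal has a genuine gap in Step~1. The comparison you envisage between the ball averages and the semigroup maximal function \emph{cannot} be made dimension-free: the best pointwise inequality of the form
\[
\frac{1}{|B(0,r)|}\chi_{B(0,r)}(y)\le C\, h_{t_r}(y)
\]
holds only with $C$ of order $\sqrt n$, and the paper records exactly this (see the discussion following inequality~(\ref{part of main theorem}) in Section~3). Your suggestion that Stein ``overcomes this by using the full maximal function of the semigroup together with a $g$-function identity'' is a misreading of Stein's argument: there is no square-function identity that compares ball averages directly to a diffusion semigroup with constant independent of $n$. The $g$-function estimates in Stein's work serve a different purpose (bounding the spherical maximal operator, not comparing balls to heat kernels). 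Consequently, carrying out Step~2 via Junge--Xu's semigroup maximal theorem~(\ref{p estimate for Tt}) yields only $c_p=O(\sqrt n)$, which the paper already notes and then improves.

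The paper's proof follows Stein's \emph{actual} dimension-free mechanism, which goes through the spherical maximal function rather than a semigroup. One fixes $k$ to be the smallest integer exceeding $\max(p/(p-1),2)$---so $k$ depends on $p$ but not on $n$---and proves an operator-valued spherical maximal inequality in $\R^k$ (Proposition~\ref{spherical maximal inequality}). This immediately gives a weighted radial maximal inequality (Proposition~\ref{weighted maximal inequality}) and, by Fubini, a ``directional'' maximal inequality along $k$-planes in $\R^n$ (Proposition~\ref{directional maximal inequality}), all with constant $A_{k,p}$ independent of $n$. The ball average in $\R^n$ is then recovered from these $k$-dimensional averages via the rotation-averaging identity~(\ref{key identity}),
\[
f_r(x)=\int_{O(n)} f_r^{k,n-k,\rho}(x)\,d\rho,
\]
so that $F=\int_{O(n)}F^\rho\,d\rho$ dominates all $f_r$ with $\|F\|_p\le A_{k,p}\|f\|_p$. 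The semigroup and $g$-function machinery does appear, but only inside the proof of the spherical maximal inequality in the fixed low dimension $k$ (Section~5), where the constants are allowed to depend on $k$.
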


This is an matrix-valued analogue of Stein and Str\"omberg's result.
We should point out that the previous two theorems are also true by
replacing $B(\ell_2)$ with any von Neumann algebra equipped with a
trace. But for simplicity, we only prove them in the matrix-valued
case. In section 4, we prove Theorem \ref{main theorem}. The main
idea is due to Stein and Str\"omberg, but we should make use of
the techniques and tools developped recently in the  noncommutative analysis. A key
ingredient in Stein's argument is the spherical maximal inequality.
In section 5, we prove an operator-valued version. In a forthcoming
paper\cite{Hon}, we prove a noncommutative version of Nevo and
Thangavelu's ergodic theorems for radial averages on the Heisenberg
Group \cite{NeTh97}, and this spherical maximal inequality can be
viewed as a special case of this kind of maximal ergodic
inequalities.

Since this paper depends heavily on noncommutative maximal ergodic
inequalities and noncommutative Marcinkiewicz interpolation theorem,
and some readers may not be familiar with the main results or its
related notations, we shall recall part of them in section 2.
Throughout this paper, $C$ denotes a universal constant, may varying
from line to line.

\section{Preliminaries}

We first recall the definition of the noncommutative maximal norm
introduced by Pisier \cite{Pis98} and Junge \cite{Jun02}. Let $\M$
be a von Neumann algebra equipped with a normal semifinite faithful
trace $\tau.$ Let $1\leq p\leq\infty$. We define
$L_p(\M;\ell_\infty)$ to be the space of all sequences
$x=(x_n)_{n\ge1}$ in $L_p(\M)$ which admit a factorization of the
following form: there exist $a, b\in L_{2p}(\M)$ and a bounded
sequence $y=(y_n)$ in $L_\infty(\M)$ such that
 $$x_n=ay_nb, \quad \forall\; n\geq1.$$
The norm of  $x$ in $L_p(\M;\ell_\infty)$ is given by
 $$\|x\|_{L_p(\M;\ell_\infty)}=\inf\big\{\|a\|_{2p}\,
 \sup_{n\geq1}\|y_n\|_\infty\,\|b\|_{2p}\big\} ,$$
where the infimum runs over all factorizations of $x$ as above.

We will follow the convention adopted in  \cite{JuXu06}  that
$\|x\|_{L_p(\M;\ell_\infty)}$ is denoted by
 $\big\|\sup_n^+x_n\big\|_p\ .$ We should warn the reader that
$\big\|\sup^+_nx_n\big\|_p$ is just a notation since $\sup_nx_n$
does not make any sense in the noncommutative setting. We find,
however, that $\big\|\sup^+_nx_n\big\|_p$ is more intuitive than
$\|x\|_{L_p(\M;\ell_\infty)}$. The introduction of this notation is
partly justified by the following remark.
\medskip

\begin{remark}
Let $x=(x_n)$ be a sequence of selfadjoint operators in $L_p(\M)$.
Then $x\in L_p(\M;\ell_\infty)$ iff there exists a positive element
$a\in L_p(\M)$ such that $-a\leq x_n \le a$ for all $n\ge1$. In this
case we have
 $$\big \|{\sup_{n \ge1}}^{+} x_n \big \|_p=\inf\big\{\|a\|_p\;:\; a\in L_p(\M),\; -a\leq x_n\le a,\;\forall\; n\geq1\big\}.$$
\end{remark}

More generally, if $\Lambda$ is any index set, we define  $L_p (\M;
\ell_{\infty}(\Lambda))$ as the space of all $x =
(x_{\lambda})_{\lambda \in \Lambda}$ in $L_p (\M)$ that can be
factorized as
 $$x_{\lambda}=ay_{\lambda} b\quad\mbox{with}\quad a, b\in L_{2p}(\M),\; y_{\lambda}\in L_\infty(\M),\; \sup_{\lambda}\|y_{\lambda}\|_\infty<\infty.$$
The norm of $L_p (\M; \ell_{\infty}(\Lambda))$ is defined by
 $$\big \| {\sup_{{\lambda}\in\Lambda}}^{+} x_{\lambda}\big \|_p=\inf_{x_{\lambda}=ay_{\lambda} b}\big\{\|a\|_{2p}\,
 \sup_{{\lambda}\in\Lambda}\|y_{\lambda}\|_\infty\,\|b\|_{2p}\big\} .$$
It is shown in \cite{JuXu06} that $x\in L_p (\M;
\ell_{\infty}(\Lambda))$ iff
 $$\sup\big\{\big \| {\sup_{{\lambda}\in J}}^{+} x_{\lambda}\big \|_p\;:\; J\subset\Lambda,\; J\textrm{ finite}\big\}<\infty.$$
In this case, $\big \| {\sup_{{\lambda}\in\Lambda}}^{+}
x_{\lambda}\big \|_p$ is equal to the above supremum.

A closely related operator space is $L_p(\mathcal{M};\ell^c_{\infty})$ for $p\geq2$ which is the set of all sequences $(x_n)_n\subset L_p(\mathcal{M})$ such that
$$\|{\sup_{n\geq1}}^+|x_n|^2\|^{1/2}_{p/2}<\infty.$$
While $L_p(\mathcal{M};\ell^r_{\infty})$ for $p\geq2$ is the Banach space of all sequences $(x_n)_n\subset L_p(\mathcal{M})$ such that $(x^*_n)_n\in L_p(\mathcal{M};\ell^c_{\infty})$. All these spaces fall into the scope of amalgamated $L_p$ spaces intensively studied in \cite{JuPa10}. What we need about these spaces is the following interpolation results.
\begin{lemma}\label{l8c+l8r}
Let $2\leq p\leq\infty$. Then we have
$$(L_{p}(\mathcal{M};\ell^c_{\infty}),L_{p}(\mathcal{M};\ell^r_{\infty}))_{1/2}=L_{p}(\mathcal{M};\ell_{\infty})$$
with equivalent norms.
\end{lemma}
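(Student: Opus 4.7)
The plan is to prove the two inclusions separately by Stein-type complex interpolation, using the factorization structure of the three spaces. For $p\ge 2$, an element of $L_p(\mathcal{M};\ell^c_\infty)$ admits a column factorization $x_n=y_nb$ with a single $b\in L_p(\mathcal{M})_+$ and $\sup_n\|y_n\|_\infty<\infty$, and dually $L_p(\mathcal{M};\ell^r_\infty)$ admits $x_n=ay_n$ with $a\in L_p(\mathcal{M})_+$; an element of $L_p(\mathcal{M};\ell_\infty)$ admits $x_n=ay_nb$ with $a,b\in L_{2p}(\mathcal{M})_+$.

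\textbf{Symmetric into interpolation.} Given $x_n=ay_nb$ as above, normalized so $\sup_n\|y_n\|_\infty\le 1$, consider the analytic family
$$F_n(z)=a^{2(1-z)}\,y_n\,b^{2z},\qquad 0\le\mathrm{Re}\,z\le 1,$$
so that $F_n(1/2)=x_n$. On the line $\mathrm{Re}\,z=0$, write $F_n(it)=a^{2}\cdot(a^{-2it}y_nb^{2it})$: the left coefficient $a^2\in L_p(\mathcal{M})$ is independent of $n$ and $t$, and the multiplier $a^{-2it}y_nb^{2it}$ has $L_\infty$-norm at most $1$ since $a^{\pm 2it},b^{\pm 2it}$ have spectral norm one. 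Hence $(F_n(it))_n\in L_p(\mathcal{M};\ell^r_\infty)$ with norm at most $\|a\|_{2p}^{2}$ uniformly in $t$. Symmetrically, $(F_n(1+it))_n\in L_p(\mathcal{M};\ell^c_\infty)$ with norm at most $\|b\|_{2p}^{2}$. The Hadamard three-lines lemma applied at $z=1/2$ gives $\|x\|_{1/2}\le\|a\|_{2p}\|b\|_{2p}$, and taking the infimum over factorizations completes this inclusion.

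\textbf{Interpolation into symmetric.} For the reverse inclusion I would invoke the abstract interpolation machinery for amalgamated $L_p$-spaces developed in \cite{JuPa10}. Setting $\mathcal{N}=L_\infty(\mathcal{M};\ell_\infty)$, the column, row, and symmetric spaces above are, respectively, $\mathcal{N}\cdot L_p(\mathcal{M})$, $L_p(\mathcal{M})\cdot\mathcal{N}$, and $L_{2p}(\mathcal{M})\cdot\mathcal{N}\cdot L_{2p}(\mathcal{M})$, and the general interpolation identity
$$(\mathcal{N}\cdot L_p(\mathcal{M}),\,L_p(\mathcal{M})\cdot\mathcal{N})_{1/2}=L_{2p}(\mathcal{M})\cdot\mathcal{N}\cdot L_{2p}(\mathcal{M})$$
specialises to precisely the claim. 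As a more hands-on alternative, one can start from an analytic family $F_n(z)$ realising the interpolation norm with row (resp.\ column) factorizations on the boundary $\mathrm{Re}\,z=0$ (resp.\ $\mathrm{Re}\,z=1$) and average the positive parts of the boundary coefficients against the Poisson kernel at $z=1/2$ to synthesize $a,b\in L_{2p}(\mathcal{M})_+$ and multipliers $y_n$ witnessing $x_n=ay_nb$.

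The main obstacle lies in the converse direction: boundary factorizations of an interpolation family depend nontrivially on the imaginary parameter, so one cannot simply extract $t$-independent coefficients. Reconciling them requires combining polar decomposition with analytic continuation of $L_p$-valued functions and further auxiliary applications of the three-lines lemma. The hypothesis $p\ge 2$ enters exactly here, since only in that range is the Hölder-type factorization $L_p=L_{2p}\cdot L_{2p}$ available and do the row/column spaces admit the module descriptions above.
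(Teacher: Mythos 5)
The paper does not actually prove this lemma; it states it and refers the reader to \cite{Jun02}, \cite{Mus03}, and \cite{JuPa10}, so there is no in-text proof to compare against. Judged on its own terms, the half of your argument that you carry out is sound: for the inclusion $L_p(\mathcal{M};\ell_\infty)\subseteq\bigl(L_p(\mathcal{M};\ell^c_\infty),L_p(\mathcal{M};\ell^r_\infty)\bigr)_{1/2}$, the analytic family $F_n(z)=a^{2(1-z)}y_nb^{2z}$ built from a factorization $x_n=ay_nb$ with $a,b\in L_{2p}(\mathcal{M})_+$ (positivity of $a,b$ is indeed no loss, by polar decomposition with the partial isometries absorbed into $y_n$) does have row boundary data at $\mathrm{Re}\,z=0$ and column boundary data at $\mathrm{Re}\,z=1$ with the stated norms, and the three-lines estimate then gives $\|x\|_{1/2}\le\|a\|_{2p}\|b\|_{2p}$. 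One should add the standard approximation step (truncating $a,b$ so that $z\mapsto a^{2(1-z)}$, $z\mapsto b^{2z}$ are genuinely analytic and bounded on the strip), but that is routine.

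The substantive content of the lemma is the reverse inclusion, and there you do not actually give a proof. Appealing to the interpolation theory of amalgamated $L_p$-spaces in \cite{JuPa10} is precisely what the paper itself does, so it is a citation rather than an argument, and the proposed module identity is asserted without derivation. Your ``hands-on alternative'' of averaging the boundary factorizations against the Poisson kernel is exactly where the difficulty lies, and you correctly diagnose it yourself: an admissible analytic family has $t$-dependent boundary factorizations $F_n(it)=a_t z_{n,t}$, and there is no elementary mechanism to pass from the family $(a_t)_t\subset L_p(\mathcal{M})$ to a single pair $a,b\in L_{2p}(\mathcal{M})$ and bounded multipliers $y_n$ with $x_n=ay_nb$. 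Making this precise requires either the module/Haagerup-tensor machinery of \cite{JuPa10} or an outer-factorization argument combined with further three-lines estimates, neither of which appears in the proposal. As written, the converse inclusion remains an open gap in the proof.
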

We refer the reader to \cite{Jun02}, \cite{Mus03} and \cite{JuPa10} for more properties on these spaces. 

Yeadon's weak type $(1,1)$ maximal ergodic inequality for semigroup
is stated as follows:
\begin{lemma}\label{yeadon's theorem}
Let $(T_t)_{t\geq0}$ be a semigroup of linear maps on $\M$. Each
$T_t$ for $t\geq0$ satisfies the following properties:
\begin{enumerate}[\rm (i)]
\item $T_t$ is a contraction on $\M$:
$\|Tx\|_{\infty}\leq\|x\|_{\infty}$ for all $x\in\M$;
\item $T_t$ is positive: $Tx\geq0$ if $x\geq0$;
\item $\tau\circ T\leq\tau$: $\tau(T(x))\leq\tau(x)$ for all $x\in
L_1(\M)\cap\M^+$.
\end{enumerate}
Let $x\in L_1^+(\M)$, then for any $\lambda>0$, there exists a
projection $e\in\mathcal{\M}$ such that
$$eM_t(x)e\leq\lambda,\;\forall t>0,\;\;\mathrm{and}\;\;\tau(e^{\perp})\leq\frac{\|x\|_1}{\lambda},$$
where $M_t$ is defined as
$$M_t=\frac{1}{t}\int^t_0T^sds,\;\;\forall t>0.$$
\end{lemma}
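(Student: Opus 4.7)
The strategy is to reduce the semigroup maximal inequality to Yeadon's discrete ergodic inequality by sampling the semigroup at a small scale $\epsilon>0$. For $\epsilon>0$ set $y_\epsilon:=\int_0^\epsilon T_s x\,ds\in L_1^+(\M)$; hypothesis (iii) gives $\|y_\epsilon\|_1\le\epsilon\|x\|_1$. The time-$\epsilon$ map $T_\epsilon$ is itself a positive contraction satisfying (i)--(iii), so the original discrete version of Yeadon's theorem, applied to $T_\epsilon$ and $y_\epsilon$ with threshold $\epsilon\lambda$, produces a projection $e_\epsilon\in\M$ with
\[
e_\epsilon\Bigl(\tfrac{1}{n}\sum_{k=0}^{n-1}T_\epsilon^k y_\epsilon\Bigr)e_\epsilon\le\epsilon\lambda\quad\forall\,n\ge 1,\qquad \tau(e_\epsilon^\perp)\le\|y_\epsilon\|_1/(\epsilon\lambda)\le\|x\|_1/\lambda.
\]
The semigroup identity $T_\epsilon^k y_\epsilon=\int_{k\epsilon}^{(k+1)\epsilon}T_u x\,du$ telescopes the discrete Cesàro sum into a continuous average, namely $\tfrac{1}{n}\sum_{k=0}^{n-1}T_\epsilon^k y_\epsilon=\epsilon\,M_{n\epsilon}x$, so the inequality reads $e_\epsilon M_{n\epsilon}x\,e_\epsilon\le\lambda$ for all $n\ge 1$.

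Next, I would interpolate between sample times. Since $s\mapsto sM_s x=\int_0^s T_u x\,du$ is operator-monotonically increasing, any $t\in[n\epsilon,(n+1)\epsilon)$ with $n\ge 1$ satisfies $M_t x\le\tfrac{(n+1)\epsilon}{t}M_{(n+1)\epsilon}x\le 2M_{(n+1)\epsilon}x$, and sandwiching gives $e_\epsilon M_t x\,e_\epsilon\le 2\lambda$ for every $t\ge\epsilon$, with the trace bound $\tau(e_\epsilon^\perp)\le\|x\|_1/\lambda$ independent of $\epsilon$. To assemble a single projection $e$ valid for all $t>0$, take $\epsilon_k=2^{-k}$ and $e_k:=e_{\epsilon_k}$, then extract a weak-$*$ cluster point $a\in\M$ of $(e_k)$ with $0\le a\le 1$; lower semicontinuity of $\tau$ yields $\tau(1-a)\le\|x\|_1/\lambda$, while the bounds $e_k M_t x\,e_k\le 2\lambda$ pass to the limit to give $a M_t x\,a\le 2\lambda$ for every fixed $t>0$ (using that $t\ge\epsilon_k$ for all large $k$). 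The spectral cutoff $e:=\chi_{(1/2,1]}(a)$ is a genuine projection dominated by $2a\cdot e$, yielding $eM_t x\,e\le 8\lambda$ for all $t>0$ together with $\tau(e^\perp)\le 2\|x\|_1/\lambda$ by Chebyshev; rescaling $\lambda$ by a universal constant recovers the stated form.

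The main obstacle is this final limiting step: weak-$*$ limits of projections are merely positive contractions, so some spectral truncation is unavoidable, and one must carefully balance the loss in the majorization constant against the loss in the trace bound. A technically cleaner alternative, which bypasses the compactness argument entirely, is to directly transplant Yeadon's original Hopf-type maximal-projection construction from the discrete to the semigroup setting, building the desired $e$ via the spectral resolution of the supremum (over a countable dense set of times) of $\int_0^t(T_u x-\lambda)\,du$ and invoking monotone convergence, paralleling the proof of the discrete case line by line.
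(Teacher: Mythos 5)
The paper does not actually prove this lemma: it is quoted from Yeadon's 1977 paper (for the discrete Cesàro averages $\frac1n\sum_{k<n}T^k$), with the continuous-time version being the standard extension found, e.g., in Junge and Xu. There is therefore no in-paper proof to compare against, and I assess your argument on its own.

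Your reduction to the discrete case is the correct and standard one: $y_\epsilon=\int_0^\epsilon T_sx\,ds$ satisfies $\|y_\epsilon\|_1\le\epsilon\|x\|_1$ by (iii), the semigroup law gives $\frac1n\sum_{k=0}^{n-1}T_\epsilon^k y_\epsilon=\epsilon M_{n\epsilon}x$, and monotonicity of $s\mapsto sM_sx$ interpolates between sample times at the cost of a factor $\le 2$. Two points, however, require attention. First, the step ``the bounds $e_kM_tx\,e_k\le 2\lambda$ pass to the limit'' is not automatic, since multiplication is not jointly weak-$*$ continuous; you flag this yourself but do not close the gap. It can be closed: set $z=(M_tx)^{1/2}\in L_2^+(\M)$, rewrite $e_kM_txe_k\le2\lambda$ as $\|ze_k\|_\infty\le\sqrt{2\lambda}$, check that $ze_k\to za$ weakly in $L_2(\M)$ (pairing against $w\in L_2\cap\M$ reduces to weak-$*$ convergence of $e_k$ against an $L_1$ element), and use weak lower semicontinuity of the operator norm on $L_2(\M)$ to get $\|za\|_\infty\le\sqrt{2\lambda}$, i.e.\ $aM_txa\le2\lambda$. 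Second, and more substantively, the spectral truncation inevitably degrades the constant: after rescaling, your argument yields $\tau(e^\perp)\le C\|x\|_1/\lambda$ for a universal $C>1$ (you get $C=16$; optimizing the cutoff level and noting that for fixed $t$ the factor $(n+1)\epsilon/t\to1$ as $\epsilon\to0$ improves this to about $27/4$, but never to $1$), whereas the lemma asserts $\tau(e^\perp)\le\|x\|_1/\lambda$ with constant exactly $1$. This is harmless for the paper's use in Theorem 1.1, where only $c_1\lesssim n$ is claimed, but it means you prove a formally weaker statement; recovering the sharp constant requires adapting Yeadon's Hopf-type maximal-projection construction directly to continuous time, as you suggest but do not carry out in your final paragraph.
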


In order to extend this result to $p>1$, Junge and Xu \cite{JuXu06}
proved the following much involved noncommutative Marcinkiewicz
theorem for $L_p(\M;\ell_{\infty})$.
\begin{lemma}\label{marcinkiewicz theorem}
Let $1\leq p_0<p_1\leq\infty$. Let $S=(S_n)_{n\geq0}$ be a sequence
of maps from $L^+_{p_0}(\M)+L^+_{p_1}(\M)$ into $L^+_0(\M)$. Assume
that $S$ is subadditive in the sense that $S_n(x+y)\leq
S_n(x)+S_n(y)$ for all $n\in\mathbb{N}$. If $S$ is of weak type
$(p_0,p_0)$ with constant $C_0$ and of type $C_1$, then for any
$p_0<p<p_1$, $S$ is of type $(p,p)$ with constant $C_p$ satisfying
$$C_p\leq CC_0^{1-\theta}C_1^{\theta}(\frac{1}{p_0}-\frac{1}{p})^{-2},$$
where $\theta$ is determined by $1/p=(1-\theta)/{p_0}+\theta/{p_1}$
and $C$ is a universal constant.
\end{lemma}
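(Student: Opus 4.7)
The plan is to imitate the classical Marcinkiewicz argument, replacing scalar truncation by spectral-projection truncation of a positive operator, and replacing the integrated distribution function by the factorization defining $L_p(\M;\ell_\infty)$. A standard reiteration reduction lets me assume $p_1 = \infty$; the case of finite $p_1$ costs only a factor $C_1^\theta$ that is absorbed cleanly and does not contribute to the $(1/p_0 - 1/p)^{-2}$ blow-up. Normalize so that $x \in L_{p}^+(\M)$ with $\|x\|_p = 1$.

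For each $k \in \mathbb{Z}$ let $e_k = \mathbf{1}_{(2^k, \infty)}(x)$ be the spectral projection of $x$ at level $2^k$, and write $x = z_k + y_k$ with $z_k = x e_k$ and $y_k = x(1 - e_k)$; since $e_k$ commutes with $x$, both pieces are positive. Then $\|y_k\|_\infty \leq 2^k$, and a layer-cake computation yields $\sum_{k} 2^{k(p - p_0)} \|z_k\|_{p_0}^{p_0} \asymp \|x\|_p^p$, up to a constant of order $(p - p_0)^{-1}$. Applying the weak $(p_0, p_0)$ hypothesis to $z_k$ at level $2^k$ produces a projection $q_k \in \M$ with $q_k S_n(z_k) q_k \leq C_0 \cdot 2^k$ for all $n$ and $\tau(q_k^\perp) \leq (C_0 \|z_k\|_{p_0}/2^k)^{p_0}$. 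Applying the strong $(\infty, \infty)$ hypothesis to $y_k$ produces $a_k \in \M^+$ with $S_n(y_k) \leq a_k$ for all $n$ and $\|a_k\|_\infty \leq C_1 \cdot 2^k$. Subadditivity of $S$ then gives the uniform scale-wise bound $S_n(x) \leq S_n(z_k) + a_k$.

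The remaining step is to assemble this scale-wise information into a single factorization witnessing $\sup_n^+ S_n(x) \in L_p(\M)$. My strategy is to establish the column bound on $L_p(\M;\ell^c_\infty)$ and the row bound on $L_p(\M;\ell^r_\infty)$ separately, and then to interpolate via Lemma~\ref{l8c+l8r}. For the column bound, I would construct a single factor $u \in L_{2p}(\M)$ of the form $u = \bigl(\sum_k 2^{k\alpha}(q_k^\perp + a_k)\bigr)^{1/2}$, with a carefully calibrated exponent $\alpha$, so that $u$ simultaneously dominates $(1 - q_k) S_n(z_k)(1 - q_k)$ and $a_k$ in the appropriate Cauchy--Schwarz sense across all scales. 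The estimate of $\|u\|_{2p}$ then reduces to two geometric series in $k$ (one from the $L_{p_0}$ tail of the $z_k$-part, one from the $L_\infty$ control of the $a_k$-part), each contributing a factor $(1/p_0 - 1/p)^{-1}$, which together produce the stated exponent $-2$. The row bound is symmetric.

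The hard part is precisely this last step. The projections $q_k$ and dominants $a_k$ do not commute across scales, so I cannot intersect or order them as in the scalar Marcinkiewicz decomposition; I must instead force an inequality of the form $S_n(z_k) \leq u \, v_n^{(k)} \, u$ with $\sup_{n,k}\|v_n^{(k)}\|_\infty$ bounded and $u$ \emph{independent} of $k$ and $n$. Producing this factorization, and tuning $\alpha$ so that the two competing geometric sums converge with the sharp rate, is the technical core of the argument and the source of the $(1/p_0 - 1/p)^{-2}$ factor.
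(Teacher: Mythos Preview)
The paper does not prove this lemma. It appears in Section~2 (Preliminaries) as a quoted result of Junge and Xu \cite{JuXu06}, introduced by the sentence ``Junge and Xu proved the following much involved noncommutative Marcinkiewicz theorem for $L_p(\M;\ell_{\infty})$.'' There is therefore no argument in the present paper to compare your proposal against.

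On the substance of your sketch: the overall architecture---dyadic spectral truncation of a positive $x$, weak-type control on the large piece $z_k$, strong-type control on the bounded piece $y_k$, and then assembling a single factorization---is indeed the skeleton of the Junge--Xu proof. However, your proposed assembly step has a genuine gap. You plan to obtain bounds in $L_p(\M;\ell^c_\infty)$ and $L_p(\M;\ell^r_\infty)$ separately and then invoke Lemma~\ref{l8c+l8r} to land in $L_p(\M;\ell_\infty)$. But Lemma~\ref{l8c+l8r} is stated, and the spaces $L_p(\M;\ell^{c/r}_\infty)$ are even defined, only for $p\ge 2$; the Marcinkiewicz statement must cover every $p_0<p<p_1$, in particular $p<2$ when $p_0<2$. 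The actual Junge--Xu argument does not route through column/row interpolation; it builds the $L_p(\M;\ell_\infty)$ majorant directly, and the ``hard part'' you correctly identify---producing a single $a\in L_p^+(\M)$ dominating all $S_n(x)$ from the non-commuting family $\{q_k,a_k\}$---is handled by a considerably more delicate construction than the one you outline. As written, your plan would at best recover the range $p\ge 2$.
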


With this interpolation result, they proved that there exists a
constant $C_p$ such that \begin{align}\label{p estimate for
Mt}\|{\sup_{t>0}}^+ M_t(x)\|_p\leq C_p\|x\|_p,\;\;\forall x\in
L_p(\M).\end{align} Moreover, if additionally each $T_t$ satisfies
\begin{enumerate}[\rm (iv)]
\item $T_t$ is symmetric relative to $\tau$:
$\tau(T(y)^*x)=\tau(y^*T(x))$ for all $x,\; y$ in the intersection
$L_2(\M)\cap\M$,
\end{enumerate}
then
\begin{align}\label{p estimate for Tt}
\|{\sup_{t>0}}^+ T_t(x)\|_p\leq C_p\|x\|_p,\;\;\forall x\in
L_p(\M),
\end{align}
with $C_p$ a constant only depending on $p$.

\section{Estimates for $c_1$}

We follow Stein and Str\"omberg's original argument to prove Theorem
\ref{1 estimates}. As we shall see that it is just an application of
Yeadon's weak type $(1,1)$ noncommutative maximal ergodic
inequality.

\begin{proof}
Let $f\in L_1(\mathbb{R}^n;L_1(\M))$. Without loss of generality, we
assume $f$ is positive. We then define
$$f_r(x)=\frac{1}{|B(0,r)|}\int_{B(0,r)}f(x-y)dy.$$

Recall that the heat-diffusion semigroup on $\mathbb{R}^n$ is given
by $T^tg=g\ast h_t$, $\forall g\in \mathscr{S}(\mathbb{R}^n)$ with
$$h_t(x)=\frac{1}{(4\pi t)^{n/2}}e^{-|x|^2/{4t}}.$$
We consider the heat-diffusion semigroup on
$L_{\infty}(\mathbb{R}^n)\bar{\otimes}\M$ given by $S^t=T^t\otimes
id_{\M}$. It is easy to check that $(S^t)_{t\geq0}$ satisfies
(i)-(iii). So by Lemma \ref{yeadon's theorem}, for any $\eta>0$,
there exists a projection $e\in \mathcal{P}(\A)$ such that
$$eM_t(f)e\leq\eta,\;\forall t>0,\;\;\mathrm{and}\;\;\mathrm{tr}\otimes\int e^{\perp}\leq \frac{\|f\|_1}{\eta},$$
where $$M_t(f)=\frac{1}{t}\int^t_0
S^s(f)(x)ds=\int_{\R^n}\frac{1}{t}\int^t_0h_s(y)dsf(x-y)dy.$$ As
proved in Page 265 of \cite{StSt83}, for any $r>0$, there exists
some $t_r$ such that
\begin{align}\label{kernel average estimates}
\frac{1}{|B(0,r)|}\chi_{B(0,r)(y)}\leq
Cn\frac{1}{t_r}\int^{t_r}_0h_s(y)ds. \end{align} Hence, obviously we
have
$$ef_re\leq eCnM_{t_r}(f)e\leq Cn\eta.$$
Now for any $\lambda>0$, take $\eta=\lambda/(Cn)$, we obtain
$$\mathrm{tr}\otimes\int e^{\perp}\leq \frac{Cn\|f\|_1}{\lambda},$$
which finishes the proof.
\end{proof}

Instead of using Yeadon's inequality, but use Junge and Xu's
inequality (\ref{p estimates}), in the same spirit, we can deduce
that for $1<p\leq\infty$, there exists an absolute constant $C_p>0$
such that
\begin{align}\label{part of main theorem}
\|{\sup_{r>0}}^+f_r\|_p\leq C_pn\|f\|_p,\;\;\forall f\in L_p(\A).
\end{align}
And the constant $c_p$ can be improved to be $O(\sqrt{n})$ by the
noncommutative Stein's maximal ergodic inequality (\ref{p estimate
for Tt}) and the following fundamental estimates \cite{StSt83}: for
any $r>0$, there is $t_r>0$ such that
\begin{align}
\frac{1}{|B(0,r)|}\chi_{B(0,r)(y)}\leq Cn^{1/2}h_{t_r}(y).
\end{align}

\section{Estimates for $c_p$ ($p>1$)}

We adapt Stein's argument \cite{Ste82} (see also the appendix of
\cite{StSt83}) to the operator-valued setting. The key step of the
argument is the following operator-valued spherical maximal
inequality. Let $f\in \mathscr{S}(\R^k;S_{M})$ ($S_{M}$ is the set
of finite dimension self-adjoint matrix), for any $r>0$, we define
$$f^k_r(x)=\frac{1}{\omega_{k-1}}\int_{S^{k-1}}f(x-ry')d\sigma(y'),$$
where $d\sigma$ is the usual measure on $S^{k-1}$, and
$\omega_{k-1}$ is its total mass.

\begin{proposition}\label{spherical maximal inequality}
Let $k\geq3$ and $p>k/(k-1)$, then there exists a constant $A_{k,p}$
such that
$$\|{\sup_{r>0}}^+{f^{k}_r}\|_p\leq A_{k,p}\|f\|_p, \;\; \forall f\in L_p(\R^k;S_p).$$
\end{proposition}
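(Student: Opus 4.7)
The plan is to adapt Stein's Fourier-analytic proof of the spherical maximal theorem to the semicommutative setting $L_p(\R^k;S_p)$. Since the ambient dimension $k$ is fixed here, the dimension dependence which is the subject of the rest of the paper is not a concern; the only genuine difficulty is the passage from commutative maximal bounds to noncommutative ones, which will be handled through the column/row maximal machinery recalled in Section 2.

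The first step is a dyadic Littlewood--Paley decomposition on the Fourier side. Choose $\psi_0, \psi_1\in C_c^\infty(\R^k)$ with $\psi_j(\xi):=\psi_1(2^{-j+1}\xi)$ supported in $|\xi|\sim 2^j$ for $j\ge 1$ and $1=\sum_{j\ge 0}\psi_j$. This splits the spherical convolution as $f_r^k = \sum_{j\ge 0} A_j^r f$, where the Fourier multiplier of $A_j^r$ is $\psi_j(r\xi)\,\widehat{d\sigma/\omega_{k-1}}(r\xi)$, localized to $|r\xi|\sim 2^j$. The low-frequency piece $A_0^r f$ is dominated in the operator order by a smoothed Hardy--Littlewood-type convolution maximal function, whose $L_p(\A;\ell_\infty)$ bound for $p>1$ is already known (Mei's operator-valued inequality, or the bound \eqref{part of main theorem}); this handles $j=0$ and reduces the problem to proving an estimate for $j\ge 1$ that is summable in $j$.

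For $j\ge 1$ the plan is to establish two bounds on ${\sup_{r>0}}^+ A_j^r f$. First, an $L_2$ estimate with exponential decay
$$\big\|{\sup_{r>0}}^{+} A_j^r f\big\|_{L_2(\M;\ell^c_{\infty})} \le C\,2^{-j(k-2)/2}\,\|f\|_2,$$
obtained by the noncommutative version of Stein's Sobolev trick in the radial variable: writing
$$(A_j^r f)^{*}(A_j^r f) \;=\; 2\,\mathrm{Re}\!\int_{0}^{r} (A_j^s f)^{*}\,\partial_s A_j^s f\,ds$$
and applying operator Cauchy--Schwarz reduces the column norm to two $g$-functions of the form $\big(\int_0^\infty|A_j^s f|^2 \tfrac{ds}{s}\big)^{1/2}$ and $\big(\int_0^\infty|s\partial_s A_j^s f|^2\tfrac{ds}{s}\big)^{1/2}$, each of which is computed by Plancherel using $|\widehat{d\sigma}(\xi)|\le C(1+|\xi|)^{-(k-1)/2}$ and produces together the claimed $(k-2)/2$ decay. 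Second, a trivial $L_p$ estimate of at most polynomial growth in $j$, valid for all $p$ in some range, coming from a physical-side domination of $A_j^r f$ by the noncommutative Hardy--Littlewood maximal of $f$. Interpolating these two bounds for the column norm, then running the same argument with adjoints to get the analogous row-norm bound, and finally combining the two via Lemma \ref{l8c+l8r}, yields an $L_p(\A;\ell_\infty)$ bound with exponential decay in $j$ as soon as $p>k/(k-1)$; summation in $j$ closes the argument.

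The hard part will be the $L_2$ square-function computation: with no pointwise supremum available, the Sobolev identity has to be carried out entirely inside $L_2(\M;\ell^c_\infty)$ and the ensuing Cauchy--Schwarz must be sharp enough to recover exactly the classical $2^{-j(k-2)/2}$ decay (the $(k-1)/2$ from $\widehat{d\sigma}$ minus the $1/2$ cost of the fundamental-theorem-of-calculus step). A related technical point is that $k/(k-1)<2$ for $k\ge 3$, so the column/row machinery, which natively lives in $p\ge 2$, must be supplemented by a further Marcinkiewicz interpolation (Lemma \ref{marcinkiewicz theorem}) against either the crude $L_\infty$ bound or the trivial physical-side bound to push the decay estimate into the full range $p>k/(k-1)$.
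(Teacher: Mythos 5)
Your overall strategy coincides with the paper's: Littlewood--Paley decomposition of $\widehat{d\sigma}$ into dyadic pieces $m_j$, handle $j=0$ by domination by a smoothed Hardy--Littlewood maximal, for $j\geq 1$ prove $L_2$ decay via the Sobolev/$g$-function trick, prove an endpoint physical-side bound from the kernel estimate on $K_j=(\varphi_j)^\vee\ast d\sigma$, and interpolate (Lemma \ref{marcinkiewicz theorem}) to obtain summable decay for $p>k/(k-1)$, with a further interpolation for $p>2$.

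There is, however, a genuine error in your description of the second (physical-side) ingredient. You claim this bound has ``at most polynomial growth in $j$.'' It does not: the kernel estimate $|K_j(x)|\leq C_M 2^{j}(1+|x|)^{-M}$ (used in the paper and attributed to \cite{Gra08}, p.~399) forces $\|K_j\|_1\sim 2^{j}$, so the domination by the noncommutative Hardy--Littlewood maximal costs a factor $2^{j}$, which is exponential. This is not a cosmetic issue: the $p>k/(k-1)$ threshold you arrive at is \emph{produced} precisely by balancing the exponential decay $2^{-j(k-2)/2}$ at $L_2$ against the exponential growth $2^{j}$ at $L_1$. If the physical-side constant were truly polynomial in $j$, the interpolated constant would decay exponentially for \emph{every} $p>1$, and the proof would then ``establish'' a statement that is false already in the scalar-valued case (the spherical maximal operator is unbounded on $L_p(\R^k)$ for $1<p\leq k/(k-1)$). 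So as written the two claimed ingredients are inconsistent with the correct threshold you state; you need to acknowledge the $2^{j}$ growth for the interpolation arithmetic to actually give $p>k/(k-1)$.

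One further minor remark on the $L_2$ step: the paper sidesteps your column/row/Lemma~\ref{l8c+l8r} bookkeeping by observing that since $m_j$ is real and even, the pieces $f_{t,j}$ of a self-adjoint $f$ are themselves self-adjoint; the column bound $\|{\sup_t}^+|f_{t,j}|^2\|_1^{1/2}$ then yields directly a majorant $a$ with $f_{t,j}^2\le a$, hence $-a^{1/2}\le f_{t,j}\le a^{1/2}$ by operator monotonicity of the square root, which is the $L_2(\A;\ell_\infty)$ norm for self-adjoint families. Your route through $\ell^c_\infty$, $\ell^r_\infty$ and Lemma \ref{l8c+l8r} is workable but more roundabout than necessary.
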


We postpone its proof to the next section. The spherical maximal
inequality yields the following weighted maximal inequality. Let
$f\in\mathscr{S}(\R^k;S_{M})$, for any $m\geq0$ and $r\geq0$, we
define
$$f^{k,m}_r(x)=(\int_{|y|\leq r}|y|^mdy)^{-1}\int_{|y|\leq r}f(x-y)|y|^mdy.$$
\begin{proposition}\label{weighted maximal inequality}
Let $k\geq3$ and $p>k/(k-1)$, then
$$\|{\sup_{r>0}}^+{f^{k,m}_r}\|_p\leq A_{k,p}\|f\|_p, \;\; \forall f\in L_p(\R^k;S_p)$$
with the constant $A_{k,p}$ independent of $m$.
\end{proposition}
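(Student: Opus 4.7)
The plan is to reduce the weighted average to a probability-weighted superposition of spherical averages, and then exploit the factorization definition of the noncommutative maximal norm to transfer the bound of Proposition~\ref{spherical maximal inequality} without any loss, in particular with no dependence on $m$.

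First I would pass to polar coordinates $y=sy'$ with $s=|y|$, $y'\in S^{k-1}$. This yields
$$\int_{|y|\leq r} f(x-y)|y|^m\,dy = \omega_{k-1}\int_0^r s^{m+k-1} f^k_s(x)\,ds,\qquad \int_{|y|\leq r}|y|^m\,dy = \omega_{k-1}\frac{r^{m+k}}{m+k},$$
and hence the identity
$$f^{k,m}_r(x) = \int_0^r f^k_s(x)\,d\mu_r(s),\qquad d\mu_r(s) := \frac{m+k}{r^{m+k}}\, s^{m+k-1}\,ds.$$
The key observation is that $\mu_r$ is a \emph{probability} measure on $(0,r)$ for every $r>0$ and every $m\geq 0$.

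Next I would invoke Proposition~\ref{spherical maximal inequality}. Given $\varepsilon>0$, choose a factorization $f^k_s = a\, y_s\, b$ with $a,b\in L_{2p}(\A)$ and $(y_s)_{s>0}$ a bounded family in $L_\infty(\A)$ such that
$$\|a\|_{2p}\,\sup_{s>0}\|y_s\|_\infty\,\|b\|_{2p}\leq (1+\varepsilon)\,A_{k,p}\,\|f\|_p.$$
Linearity and integration against $d\mu_r$ then furnish the factorization $f^{k,m}_r = a\,\tilde y_r\,b$ with $\tilde y_r := \int_0^r y_s\,d\mu_r(s)$, and since $\mu_r$ is a probability measure, $\|\tilde y_r\|_\infty \leq \sup_{s>0}\|y_s\|_\infty$. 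This exact factorization therefore witnesses
$$\Big\|{\sup_{r>0}}^{+} f^{k,m}_r\Big\|_p \;\leq\; \|a\|_{2p}\,\sup_{r>0}\|\tilde y_r\|_\infty\,\|b\|_{2p}\;\leq\;(1+\varepsilon)\,A_{k,p}\,\|f\|_p,$$
and letting $\varepsilon\to0$ finishes the argument with the same constant as in Proposition~\ref{spherical maximal inequality}.

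The only technical wrinkle I anticipate is justifying the operator-valued integral $\int_0^r y_s\,d\mu_r(s)$, since the family $(y_s)$ produced by the definition of $L_p(\A;\ell_\infty((0,\infty)))$ is only required to be uniformly bounded, not measurable in $s$. I would sidestep this by using the characterization recalled in Section~2, namely that the maximal norm equals the supremum over finite subsets of the index set: approximate $f^{k,m}_r$ by Riemann sums $\sum_j \mu_r([s_{j-1},s_j])\,f^k_{s_j}$, which are genuine convex combinations of finitely many spherical means, apply the elementary finite-family factorization argument above to these, and pass to the limit. The uniformity in $m$ is then automatic, because the only property of $\mu_r$ ever used is that it is a probability measure.
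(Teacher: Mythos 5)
Your proposal is correct, and the central observation --- that polar coordinates exhibit $f^{k,m}_r$ as an average of the spherical means $f^k_s$ against a \emph{probability} measure $\mu_r$, so that the bound from Proposition~\ref{spherical maximal inequality} transfers with no loss and independently of $m$ --- is exactly the insight the paper uses. Where you diverge is in how the bound is transferred. The paper first reduces to $f\in\mathscr{S}(\R^k;S^+_M)$, invokes the positive-majorant form of the spherical inequality to produce a single $F\in L^+_p$ with $f^k_s\leq F$ for all $s$, and then simply observes that averaging a family of operators all $\leq F$ against a probability measure again yields something $\leq F$; no factorization is touched and there is no measurability issue to worry about. You instead work directly with the $a\,y_s\,b$ factorization defining $L_p(\A;\ell_\infty)$ and integrate the middle factor, which forces you to confront the non-measurability of $s\mapsto y_s$ and to patch it via finite Riemann sums and a limiting argument (which can be made rigorous, e.g.\ using the finite-subset characterization of $L_p(\M;\ell_\infty(\Lambda))$ and weak-$*$ lower semicontinuity of the maximal norm, but it is genuinely more work). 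Your route has the merit of not presupposing positivity, but since the statement concerns $L^+_p$ --- or can be reduced to it --- the paper's majorant argument is both shorter and cleaner; you might note that after the WLOG reduction to positive $f$, the Remark in Section~2 lets you replace the factorization bookkeeping entirely by the order inequality $f^{k,m}_r\leq F$.
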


\begin{proof}
Without loss of generality, we assume $f\in
\mathscr{S}(\R^k;S^+_{M})$. Using polar coordinates, we can write
\begin{align}\label{polar decomposition}
\int_{|y|\leq
r}f(x-y)|y|^mdy=\int^r_0\int_{S^{k-1}}f(x-sy')s^{m+k-1}d\sigma(y')ds.
\end{align}
By Proposition \ref{spherical maximal inequality}, there exists
$F\in L_p^+(\R^k;S_p)$ such that
$$f^k_s(x)\leq F(x),\;\forall s>0\;\;\mathrm{and}\;\;\|F\|_p\leq A_{k,p}\|f\|_p.$$
Hence
$$\mathrm{RHS}\;\mathrm{of}\;(\ref{polar decomposition})\leq F(x)\omega_{k-1}\int^r_0s^{m+k-1}ds=F(x)\omega_{k-1}\frac{r^{m+k}}{m+k}.$$
So we have
$$f^{k,m}_r\leq F,\;\forall r>0,\;\;\mathrm{and}\;\;\|F\|_p\leq A_{k,p}\|f\|_p$$
which is the desired result.
\end{proof}

We now consider $\R^n$ with $n\geq3$, and write it as
$\R^n=\R^k\times\R^{n-k}$ with its points $x$ written by
$(x_1,x_2)$. Let $\rho$ denote an arbitrary element of $O(n)$, a
rotation of $\R^n$ about the origin. Let
$f\in\mathscr{S}(\R^n;S_{\M})$, for each $\rho\in O(n)$, $r>0$, we
define
$$f^{k,n-k,\rho}_r(x)=(\int_{|y_1|\leq r}|y_1|^{n-k}dy_1)^{-1}\int_{|y_1|\leq r}f(x-\rho(y_1,0))|y_1|^{n-k}dy_1.$$
\begin{proposition}\label{directional maximal inequality}
Let $k\geq3$ and $p> k/(k-1)$, we have
$$\|{\sup_{r>0}}^+f^{k,n-k,\rho}_r\|_p\leq A_{k,p}\|f\|_p,\;\;\forall f\in L_p(\A)$$
with the constant $A_{k,p}$ independent of $n$.
\end{proposition}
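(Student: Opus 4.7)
The plan is to reduce Proposition \ref{directional maximal inequality} to Proposition \ref{weighted maximal inequality} by a rotation argument followed by Fubini in the $\mathbb{R}^{n-k}$ variable. Everything hinges on one feature already emphasized in Proposition \ref{weighted maximal inequality}: the constant $A_{k,p}$ there is independent of the weight parameter $m$. Taking $m=n-k$ is precisely the mechanism by which the extra $n-k$ directions will be absorbed at no cost in the constant.

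First I would eliminate the rotation $\rho\in O(n)$. Since Lebesgue measure on $\mathbb{R}^n$ is $O(n)$-invariant, composition with $\rho$ is an isometric normal $*$-automorphism of $\mathcal{A}$, and a change of variable inside the integral gives the intertwining identity $f^{k,n-k,\rho}_r(x) = (f\circ\rho)^{k,n-k,\mathrm{id}}_r(\rho^{-1}x)$. This reduces matters to $\rho=\mathrm{id}$. Writing $x=(x_1,x_2)\in\mathbb{R}^k\times\mathbb{R}^{n-k}$, one then observes that
$$f^{k,n-k,\mathrm{id}}_r(x_1,x_2)=\Big(\int_{|y_1|\leq r}|y_1|^{n-k}\,dy_1\Big)^{-1}\int_{|y_1|\leq r} f(x_1-y_1,x_2)\,|y_1|^{n-k}\,dy_1,$$
so that for each fixed $x_2$ this is exactly the weighted average of Proposition \ref{weighted maximal inequality} applied on $\mathbb{R}^k$ to the slice $f(\cdot,x_2)$, with exponent $m=n-k$.

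Set $\mathcal{A}_k=L_\infty(\mathbb{R}^k)\bar\otimes\mathcal{B}(\ell_2)$. The Fubini-type identification $L_p(\mathcal{A};\ell_\infty)=L_p(\mathbb{R}^{n-k};L_p(\mathcal{A}_k;\ell_\infty))$, which is a standard consequence of Pisier's theory of vector-valued noncommutative $L_p$-spaces over a commutative factor, reduces the desired estimate to the slice-wise bound
$$\big\|{\sup_{r>0}}^+ f(\cdot,x_2)^{k,n-k}_r\big\|_{L_p(\mathcal{A}_k)}\leq A_{k,p}\,\|f(\cdot,x_2)\|_{L_p(\mathcal{A}_k)}\qquad\text{for a.e. } x_2,$$
which is exactly what Proposition \ref{weighted maximal inequality} furnishes with constant independent of $m=n-k$. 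Integrating the $p$-th power in $x_2$ then yields the claim, with the same constant $A_{k,p}$ as in Proposition \ref{weighted maximal inequality} and, in particular, no dependence on $n$.

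The main obstacle I expect is precisely this measurable selection / assembly issue: it is not a priori obvious that the slice-wise noncommutative dominants for $f(\cdot,x_2)$ can be combined into a single $F\in L_p^+(\mathcal{A})$. The cleanest way to handle it is to work throughout with the noncommutative maximal norm $\|\sup_{r>0}^+\,\cdot\,\|_p$ recalled in section 2 and appeal to the amalgamated identification above, rather than try to pick the dominant pointwise in $x_2$; with that in hand the argument is essentially a Fubini computation and the rotation reduction.
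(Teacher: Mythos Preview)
Your approach is correct and is essentially the paper's: reduce to $\rho=\mathrm{id}$ by rotation invariance, apply Proposition~\ref{weighted maximal inequality} with $m=n-k$ to each slice $f(\cdot,x_2)$, and integrate the $p$-th power in $x_2$. Regarding the assembly obstacle you raise, the paper sidesteps it by working on the dense class $f\in\mathscr{S}(\mathbb{R}^n;S_M^+)$ and simply setting $F(x_1,x_2)=F_{x_2}(x_1)$, where $F_{x_2}$ is the slice-wise dominant furnished by Proposition~\ref{weighted maximal inequality}; for such test functions the construction is explicit enough that measurability is automatic, so the Fubini identification for $L_p(\mathcal{A};\ell_\infty)$ is not needed.
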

\begin{proof}
Take $f\in \mathscr{S}(\R^n;S_{M})$. Again, we assume $f$ is
positive. By rotation invariance, it suffices to prove this when
$\rho$ is the identity rotation. In this case, we decompose
$\R^n=\R^k\times\R^{n-k}$, with $x=(x_1,x_2)$. Fix $x_2\in\R^{n-k}$.
By Proposition \ref{weighted maximal inequality}, there exist
$F_{x_2}\in L^+_p(\R^k;S_p)$ such that
$$f^{k,n-k,1}_r(x_1,x_2)\leq F_{x_2}(x_1)\;\forall r>0,\;\;\mathrm{and}\;\;\|F_{x_2}\|_p\leq A_{k,p}\|f_{x_2}\|_p.$$
Define $F(x_1,x_2)=F_{x_2}(x_1)$ on $\R^n$, then we complete the
proof since $f^{k,n-k,1}_r\leq F$ for all $r>0$ and
\begin{align*}
\|F\|^p_p&=\int_{\R^{n-k}}\|F(\cdot,x_2)\|^p_pdx_2\\
&\leq
A^p_{k,p}\int_{\R^{n-k}}\|f(\cdot,x_2)\|^p_pdx_2=A^p_{k,p}\|f\|^p_p\\
\end{align*}
\end{proof}

Let $d\rho$ denote the Haar measure on the group $O(n)$, normalized
so that its total measure is $1$. Now we are at a position to prove
Theorem \ref{main theorem}.
\begin{proof}
The result for $p=\infty$ is trivial. So we only consider the case
$1<p<\infty$. When $n\leq \max(p/(p-1),2)$, we can use the estimates
(\ref{part of main theorem}). Now, we assume $n>\max(p/(p-1),2)$. We
write $n=k+(n-k)$, where $k$ is the smallest integer greater than
$\max(p/(p-1),2)$. We can assume $f$ is of the form $g\otimes m$
where $g\in\mathscr{S}^+(\R^n)$ and $m\in S^+_{M}$, since the set of
linear combinations of such elements are dense in $L_p(\A)$. For
such $f$, we have the following formula
\begin{align}\label{key identity}
\frac{\int_{|y|\leq r}f(y)dy}{\int_{|y|\leq
r}dy}=\frac{\int_{O(n)}\int_{|y_1|\leq
r}f(\rho(y_1,0))|y_1|^{n-k}dy_1d\rho}{\int_{|y_1|\leq
r}|y_1|^{n-k}dy_1}.
\end{align}
Here $y=(y_1,y_2)\in\R^n=\R^k\times\R^{n-k}$. To verify (\ref{key
identity}) it suffices to do so for $g$ of the form
$g=g_0(|y|)g_1(y')$, where $y'\in S^{n-1}$, and $y=|y|y'$, since
linear combination of such functions are dense. Then for such $g$,
$$\mathrm{LHS}\;\mathrm{of}\;(\ref{key identity})=\int^r_0g_0(t)t^{n-1}dt\cdot\int_{S^{n-1}}g_1(y')d\sigma(y')nr^{-n}\omega^{-1}_{n-1}\otimes m.$$
On the other hand, notice that
$g(\rho(y_1,0))=g_0(|y_1|)g_1(\rho(y_1,0))$, so the right hand side
of (\ref{key identity}) equals
$$\int^r_0g_0(t)t^{n-1}dt\cdot\int_{O(n)}\int_{S^{k-1}}g_1(\rho(y'_1,0)d\sigma(y_1')d\rho nr^{-n}\omega^{-1}_{k-1}\otimes m.$$
Therefore matters are reduced to check that
$$\frac{1}{\omega_{n-1}}\int_{S^{n-1}}g_1(y')d\sigma(y')=\frac{1}{\omega_{k-1}}\int_{O(n)}\int_{S^{k-1}}g_1(\rho(y'_1,0))d\sigma(y_1')d\rho$$
which is trivial because
$$\int_{O(n)}g_1(\rho(y'_1,0))d\rho=\frac{1}{\omega_{n-1}}\int_{S^{n-1}}g_1(y')d\sigma(y'_1).$$
In (\ref{key identity}), replace $f(y)$ with $f(x-y)$, we get
\begin{align*}
\frac{\int_{|y|\leq r}f(x-y)dy}{\int_{|y|\leq
r}dy}=\frac{\int_{O(n)}\int_{|y_1|\leq
r}f(x-\rho(y_1,0))|y_1|^{n-k}dy_1d\rho}{\int_{|y_1|\leq
r}|y_1|^{n-k}dy_1}.
\end{align*}
By Proposition \ref{directional maximal inequality}, for each
$\rho\in O(n)$, there exists $F^{\rho}\in L^+_p(\A)$ such that
$$f^{k,n-k,\rho}_r\leq F^{\rho},\;\forall r>0,\;\;\mathrm{and}\;\;\|F^{\rho}\|_p\leq A_{k,p}\|f\|_p.$$
Hence one can very easily deduce that
$F(x)=\int_{O(n)}F^{\rho}(x)d\rho$ is in $L_p(\A)$ and satisfy
$$f^{k,n-k}_r\leq F,\;\forall r>0,\;\;\mathrm{and}\;\;\|F\|_p\leq A_{k,p}\|f\|_p.$$
\end{proof}

\section{The proof of Proposition \ref{spherical maximal inequality}}
In order to simplify the notation, we denote  $f^n_t$ by
$f_t/{\omega_{n-1}}$. Hence
$$f_t(x)=\int_{S^{n-1}}f(x-t\theta)d\sigma(\theta).$$
We set
$m(\xi)=\widehat{d\sigma}(\xi)=2\pi|\xi|^{(2-n)/2}J_{(n-2)/2}(2\pi|\xi|)$
(see e.g. Appendix B.4 in \cite{Gra08}). Obviously $m(\xi)$ is an
infinitely differential function. We decompose the multiplier
$m(\xi)$ into radial pieces as follows: We fix a radial Schwartz
function $\varphi_0$ in $\mathbb{R}^n$ such that $\varphi_0(\xi)=1$
when $|\xi|\leq1$ and $\varphi_0(\xi)=0$ when $|\xi|\geq2$. For
$j\geq1$, we let
$$\varphi_j(\xi)=\varphi_0(2^{-j}\xi)-\varphi_0(2^{1-j}\xi)$$
and we observe that $\varphi_j(\xi)$ is localized near $|\xi|=2^j$.
Then we have
$$\sum_{j\geq0}\varphi_j=1.$$
Set $m_j=\varphi_jm$ for all $j\geq0$. The $m_j$'s are finite
supported Schwartz functions that satisfy
$$m=\sum_{j\geq0}m_j.$$
Hence,
\begin{align*}
f_t(x)=(\hat{f}(\cdot)m(t\cdot))^{\vee}
=\sum_{j\geq0}(\hat{f}(\cdot)m_j(t\cdot))^{\vee}=\sum_{j\geq0}f_{t,j}
\end{align*}
For these $f_{t,j}$, there are the following estimates.

\begin{proposition}\label{easy estimates}
Let $1<p\leq\infty$, there exists a constant $C=C(n,p)$ such that
$$\|{\sup_t}^+ f_{t,0}\|_p\leq C\|f\|_p.$$
More precisely, for $f\in L^+_p(\A)$, there exists $F_0\in L_p(\A)$
such that \begin{align}\label{inequality 1} f_{t,0}\leq
F_0,\;\forall t>0\;\;\mathrm{and}\;\;\|F_0\|_p\leq C\|f\|_p.
\end{align}
\end{proposition}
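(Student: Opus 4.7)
The plan is to realize $f_{t,0}$ as a convolution of $f$ with dilates of a fixed Schwartz kernel, dominate that kernel by a radial non-increasing integrable function, and conclude via the crude (dimension-dependent) operator-valued Hardy--Littlewood estimate (\ref{part of main theorem}).

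First I would observe that $\varphi_0$ is a compactly supported smooth function and $m(\xi)=\widehat{d\sigma}(\xi)$ is $C^\infty$ (and real, by the symmetry of $d\sigma$), so $m_0=\varphi_0 m$ is a compactly supported element of $\mathscr{S}(\mathbb{R}^n)$. Consequently $K:=m_0^{\vee}\in\mathscr{S}(\mathbb{R}^n)$ and, with $K_t(y):=t^{-n}K(y/t)$, one has the pointwise identity $f_{t,0}(x)=(f*K_t)(x)$. Fixing $N>n$, the Schwartz decay of $K$ furnishes a constant $C_N$ with $|K(y)|\leq C_N(1+|y|)^{-N}=:\Phi(y)$, and $\Phi$ is radial, non-increasing, and integrable on $\mathbb{R}^n$.

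Since $f\geq 0$ as an operator-valued function and $K_t,\Phi_t$ are \emph{scalar}, the pointwise operator inequality $K_t(y)f(x-y)\leq\Phi_t(y)f(x-y)$ is immediate; integrating in $y$ yields $f_{t,0}(x)\leq(f*\Phi_t)(x)$ uniformly in $t>0$. I would then unwind the positive convolution using the layer-cake formula $\Phi=\int_0^\infty\chi_{B(0,\rho(\tau))}\,d\tau$, with $\rho:(0,\infty)\to[0,\infty]$ non-increasing, and a Fubini interchange to get
$$(f*\Phi_t)(x)=c_n\int_0^\infty\rho(\tau)^n\,f_{t\rho(\tau)}(x)\,d\tau,\qquad c_n=|B(0,1)|,$$
noting $c_n\int_0^\infty\rho(\tau)^n\,d\tau=\|\Phi\|_1$. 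Invoking (\ref{part of main theorem}) provides $F\in L_p^+(\mathcal{A})$ with $f_r\leq F$ for all $r>0$ and $\|F\|_p\leq C_p n\|f\|_p$, so setting $F_0:=\|\Phi\|_1 F$ we obtain $f_{t,0}\leq F_0$ for every $t>0$ and $\|F_0\|_p\leq C(n,p)\|f\|_p$, which is exactly (\ref{inequality 1}).

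No step in this plan presents a genuine difficulty: because $K_t$ and $\Phi_t$ are scalar, every operator inequality reduces to multiplying the positive operator $f(x-y)$ by a scalar, and all integrals converge by $L_1$-integrability of the kernels. This is precisely why the proposition is labelled ``easy estimates''; the hard part will be to handle the remaining pieces $f_{t,j}$ with $j\geq 1$, where $m_j$ oscillates, admits no radial non-increasing majorant, and one must exploit the decay of $m$ and its derivatives together with the operator-valued spherical maximal inequality.
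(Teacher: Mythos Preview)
Your argument is correct and follows essentially the same route as the paper. The paper simply invokes Lemma~\ref{result of cxy} (Theorem~4.3 of \cite{CXY}) as a black box to handle convolution with dilates of a radially dominated Schwartz kernel, whereas you unpack that lemma via the layer-cake decomposition and reduce to the ball-average maximal inequality~(\ref{part of main theorem}); this is exactly the standard proof of the cited lemma, so the two arguments coincide in substance.

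One small remark: since $K=m_0^{\vee}$ need not be nonnegative, $f_{t,0}$ is self-adjoint but not positive, and the maximal norm $\|\sup_t^+ f_{t,0}\|_p$ in the first displayed inequality requires a two-sided bound $-F_0\le f_{t,0}\le F_0$. Your argument gives this immediately from $-\Phi_t(y)\le K_t(y)\le \Phi_t(y)$, but you only wrote out the upper half; it is worth stating explicitly.
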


\begin{proposition}\label{main estimates}
Let $1<p\leq 2$. There exists a universal constant $C=C(n,p)$ such
that for any $j\geq1$, we have
$$\|{\sup_t}^{+} f_{t,j}\|_p\leq C2^{(n/p-(n-1))j}\|f\|_p,\;\;\;\forall f\in L_p(\A).$$
More precisely, for $f\in L^+_p(\A)$, there exists $F_j\in L_p(\A)$
such that
\begin{align}\label{inequality 2}
f_{t,j}\leq F_j,\;\forall t>0\;\;\mathrm{and}\;\;\|F_j\|_p\leq
C2^{(n/p-(n-1))j}\|f\|_p.
\end{align}
\end{proposition}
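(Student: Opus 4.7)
The plan is to combine two endpoint estimates via the noncommutative Marcinkiewicz theorem: a strong $(2,2)$ bound with constant $\lesssim 2^{-(n-2)j/2}$ and a weak-type $(1,1)$ bound with constant $\lesssim_n 2^j$. The exponent $n/p-(n-1)$ in the statement is exactly the linear interpolant, in $1/p$, of $1$ at $p=1$ and $-(n-2)/2$ at $p=2$, so Lemma \ref{marcinkiewicz theorem} will deliver the claim for every $1<p<2$, while $p=2$ is already the endpoint itself.

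For the $L_2$ estimate, the key point is that $\varphi_j(0)=0$ for $j\geq 1$, so $f_{s,j}\to 0$ as $s\to 0^+$ and the fundamental theorem of calculus applies. Combining it with the operator AM--GM inequality $A^*B+B^*A\leq \epsilon\, A^*A+\epsilon^{-1}B^*B$ at the scale-adapted choice $\epsilon=2^{-j}s$ (the natural one after rescaling $s=2^j u$), and enlarging the integration to $[0,\infty)$ by positivity, one gets
\[
f_{t,j}^*f_{t,j}\leq 2^{-j}\int_0^\infty s\,|\partial_s f_{s,j}|^2\,ds+2^j\int_0^\infty s^{-1}|f_{s,j}|^2\,ds,
\]
whose right-hand side is independent of $t$. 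Taking $\mathrm{tr}\otimes\int dx$, applying Plancherel, and exploiting the frequency localization $m_j(s\,\cdot\,)$ at $|\xi|\sim 2^j/s$ together with the Bessel-type bounds $\|m_j\|_\infty,\,\|um_j'(u)\|_\infty\lesssim 2^{-(n-1)j/2}$, both integrals contribute $\lesssim 2^{(2-n)j}\|f\|_2^2$. This yields $\|\sup_t^+ f_{t,j}\|_{L_2(\A;\ell_\infty^c)}\lesssim 2^{-(n-2)j/2}\|f\|_2$. Since $f\geq 0$ and the kernel $K_{j,t}$ is real and radial, $f_{t,j}$ is self-adjoint, so the identical estimate holds in $L_2(\A;\ell_\infty^r)$, and Lemma \ref{l8c+l8r} then delivers the $L_2(\A;\ell_\infty)$ bound at the same rate.

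For the weak $(1,1)$ estimate, a direct analysis of $K_j=\varphi_j^\vee*d\sigma$ yields the pointwise decay $|K_j(z)|\lesssim_{n,N} 2^j\bigl(1+2^j\bigl||z|-1\bigr|\bigr)^{-N}$ for every $N$: the kernel is concentrated in a spherical shell of thickness $\sim 2^{-j}$, with $L^\infty$ norm of order $2^j$ and $L^1$ norm of order $1$. Dominating this Schwartz majorant by a summable combination of normalized indicators of Euclidean balls,
\[
|K_{j,t}(z)|\lesssim_n 2^j\sum_{\ell\geq 0}w_\ell\,\frac{\chi_{B(0,R_\ell(t))}(z)}{|B(0,R_\ell(t))|},\qquad \sum_\ell w_\ell\lesssim 1,
\]
and using that $K_{j,t}$ is scalar and real while $f\geq 0$, so that $\pm f_{t,j}(x)\leq(|K_{j,t}|*f)(x)$ in the operator order, we deduce $\pm f_{t,j}(x)\lesssim_n 2^j\sum_\ell w_\ell\, f_{R_\ell(t)}(x)$. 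Applying Theorem~1.1 to $f$ at threshold $\eta=\lambda/(C_n 2^j)$ produces a single projection $e\in\mathcal{P}(\A)$ with $ef_re\leq \eta$ for every $r>0$ and $\mathrm{tr}\otimes\int e^\perp\lesssim_n 2^j\|f\|_1/\lambda$; this same $e$ then satisfies $|ef_{t,j}e|\leq\lambda$ uniformly in $t$, which is the desired weak-type inequality.

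Finally, interpolating the two endpoints by Lemma \ref{marcinkiewicz theorem} with $p_0=1$, $p_1=2$ and $\theta=2(1-1/p)$ produces a strong $(p,p)$ constant of order $C_0^{1-\theta}C_1^\theta=2^{((1-\theta)-(n-2)\theta/2)j}=2^{(n/p-(n-1))j}$, exactly as claimed. The main technical obstacle is the $L_2$ estimate: the scale-adapted choice $\epsilon=2^{-j}s$ is essential, since under the naive choice $\epsilon=s$ the two pieces $\int s|\partial_s f_{s,j}|^2\,ds$ and $\int s^{-1}|f_{s,j}|^2\,ds$ contribute $2^{(3-n)j}$ and $2^{(1-n)j}$ respectively, giving only the inferior rate $2^{(3-n)j/2}$ and an interpolated exponent strictly worse than $n/p-(n-1)$.
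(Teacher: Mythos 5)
Your proof is correct and follows essentially the same route as the paper's, so I will just flag the small points where the bookkeeping differs. For the $L_2$ endpoint the paper also begins from the fundamental theorem of calculus for $f_{s,j}^*f_{s,j}$ (using $\varphi_j(0)=0$ for $j\geq 1$), but then passes through the two $g$-functions $G_j$ and $\tilde G_j$ and a Cauchy--Schwarz step to bound $\|\sup_t^+|f_{t,j}|^2\|_1^{1/2}$ by $\|G_j(f)\|_2^{1/2}\|\tilde G_j(f)\|_2^{1/2}$; your weighted AM--GM with $\epsilon=2^{-j}s$ is the arithmetic-mean version of exactly that Cauchy--Schwarz step (Cauchy--Schwarz is precisely the optimization over $\epsilon$, so no new idea is needed there, and the paper does not face the ``wrong $\epsilon$'' trap you describe). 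One genuine improvement on your side: the paper's Lemma there actually produces the $L_2(\A;\ell_\infty^c)$ bound and then jumps to $\|\sup_t^+f_{t,j}\|_2$ without comment; your step of noting that $f_{t,j}$ is self-adjoint (real radial kernel, $f\geq 0$) so the $\ell_\infty^c$ and $\ell_\infty^r$ norms coincide, and then invoking Lemma~\ref{l8c+l8r}, makes that passage rigorous and is worth recording. For the weak $(1,1)$ endpoint the paper uses the coarser decay $|K_j(x)|\leq C_M 2^j(1+|x|)^{-M}$ and applies Lemma~\ref{result of cxy} (Chen--Xu--Yin) directly with $\|\psi\|_1\sim 2^j$; your sharper shell estimate $|K_j(z)|\lesssim 2^j(1+2^j||z|-1|)^{-N}$ and the reduction to Theorem~1.1 via normalized ball indicators is an unpacking of the same convolution-with-radially-decreasing-majorant argument, which is fine but unnecessary once Lemma~\ref{result of cxy} is available. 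The interpolation step via Lemma~\ref{marcinkiewicz theorem} and the arithmetic giving the exponent $n/p-(n-1)$ are identical.
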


With the two previous estimates, we can finish the proof of
Proposition \ref{spherical maximal inequality}.

\begin{proof}
Let $f\in L^+_p(\A)$. When $2\geq p>n/(n-1)$, by Proposition
\ref{easy estimates} and \ref{main estimates}, we find $F_j$'s
satisfying inequality (\ref{inequality 1}) or (\ref{inequality 2}).
We set
$$F=\sum_{j\geq0}F_j.$$
Then
\begin{align*}
f_t=\sum_{j\geq0}f_{t,j}\leq \sum_{j\geq0}F_j=F,\;\forall t>0
\end{align*}
and
\begin{align*}
\|F\|\leq\sum_{j\geq0}\|F_j\|_p\leq
C\sum_{j\geq0}2^{(n/p-(n-1))j}\|f\|_p=C\|f\|_p.
\end{align*}
When $p\geq 2$, we invoke the noncommutative interpolation theorem ,
Lemma \ref{marcinkiewicz theorem} to obtain the estimates.
\end{proof}

The rest of this section is devoted to the proof of the two
propositions. Proposition \ref{easy estimates} is a trivial
application of the following Theorem 4.3 of \cite{CXY}.

\begin{lemma}\label{result of cxy}
  Let $\psi$ be an integrable function on $\R^n$ such that $|\psi|$ is radial and radially decreasing.
 Let $\psi_t(x)=\frac1{t^n}\, \psi(\frac xt)$ for $x\in\R^n$ and $t>0$.
 \begin{enumerate}[{\rm i)}]
 \item  Let $f\in L_1(\R^n; S_1)$. Then for  any $\alpha>0$ there exists a projection $e\in \mathcal{P}(\A)\overline{\otimes}\M$ such that
 $$\sup_{t>0}\big\|e(\psi_t\ast f)e\big\|_\infty\leq\alpha\quad\text{and}\quad \mathrm{tr}\otimes\int e^{\perp}\leq C_n\|\psi\|_1\frac{\|f\|_1}\alpha.$$

\item  Let $1<p\leq\infty$. Then
 $$\big\|{\sup_{t>0}}^+\psi_t*f\big\|_p\le C_n\|\psi\|_1\,\frac{p^2}{ (p-1)^2}\|f\|_p,\quad\forall\; f\in L_p(\R^n; S_p)).$$
\end{enumerate}
\end{lemma}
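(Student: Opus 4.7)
The plan is to reduce the bound on the convolution $\psi_t*f$ to the operator-valued Hardy--Littlewood maximal inequality for Euclidean balls via a layer-cake decomposition of $\psi$. First, by splitting $\psi$ into real and imaginary parts and then invoking the elementary order inequality
$$-|\psi|_t*f\le \psi'_t*f\le |\psi|_t*f\quad (f\ge 0),$$
valid for any real-valued scalar $\psi'$ with $|\psi'|\le|\psi|$ pointwise, it suffices to establish both statements when $\psi$ itself is nonnegative, radial and radially decreasing. Under this assumption and for positive $f$, the layer-cake identity $\psi(y)=\int_0^\infty\chi_{\{\psi>s\}}(y)\,ds$, combined with the fact that each super-level set is a ball $B(0,r(s))$, yields the key representation
$$\psi_t*f(x)=c_n\int_0^\infty r(s)^n\, f_{tr(s)}(x)\,ds,$$
where $f_r(x)=|B(x,r)|^{-1}\int_{B(x,r)}f(y)\,dy$, $c_n$ is the volume of the unit ball, and $\|\psi\|_1=c_n\int_0^\infty r(s)^n\,ds$.

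For part (ii) I would invoke Mei's operator-valued strong $(p,p)$ maximal inequality for ball averages on $\R^n$: for $1<p\le\infty$ and $f\in L_p^+(\A)$, there exists $F\in L_p^+(\A)$ with $f_r\le F$ for every $r>0$ and $\|F\|_p\le C_n\,\frac{p^2}{(p-1)^2}\|f\|_p$, where the Marcinkiewicz factor comes from Junge--Xu's interpolation theorem (Lemma~\ref{marcinkiewicz theorem}) applied to Mei's weak $(1,1)$ and trivial $L_\infty$ bounds. Substituting into the integral representation gives $\psi_t*f\le\|\psi\|_1\,F$ uniformly in $t$, which is exactly the claimed $L_p(\A;\ell_\infty)$ bound. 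For part (i) I would apply Mei's weak $(1,1)$ version: given $\alpha>0$ and $f\in L_1^+$, one produces a projection $e\in\mathcal{P}(\A)$ with $ef_re\le\alpha/\|\psi\|_1$ for all $r>0$ and $\mathrm{tr}\otimes\int e^{\perp}\le C_n\|f\|_1\|\psi\|_1/\alpha$. Because the scalar integration in $s$ commutes with sandwiching by $e$, one obtains
$$e(\psi_t*f)e=c_n\int_0^\infty r(s)^n\, ef_{tr(s)}e\,ds\le \alpha$$
uniformly in $t$, which is the weak-type statement.

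The main obstacle is conceptual rather than computational: the classical pointwise domination $\sup_t|\psi_t*f|\le\|\psi\|_1\,Mf$ is unavailable because operators lack a pointwise absolute value. Consequently, the layer-cake argument has to be phrased entirely in terms of order inequalities among positive operators, and the reduction to positive $\psi$ and positive $f$ must be carried out carefully through real/imaginary and positive/negative splittings so that every step preserves the operator inequality. Once this is in place the proof is driven entirely by Mei's noncommutative Hardy--Littlewood maximal inequality for balls, which supplies the dimensional constant $C_n$ in both statements as well as the $p^2/(p-1)^2$ blow-up in part (ii).
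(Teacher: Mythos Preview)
The paper does not prove this lemma at all: it is quoted verbatim as Theorem~4.3 of \cite{CXY} and used as a black box. So there is no ``paper's own proof'' to compare against.

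That said, your sketch is correct and is essentially the argument one expects (and presumably the one in \cite{CXY}). The layer-cake identity for a nonnegative radial decreasing $\psi$ writes $\psi_t*f$ as a positive superposition of ball averages $f_{tr(s)}$ with total weight $\|\psi\|_1$, and then Mei's operator-valued Hardy--Littlewood maximal inequality \cite{Mei07} (weak $(1,1)$ for part~(i), combined with the trivial $L_\infty$ bound and Lemma~\ref{marcinkiewicz theorem} for part~(ii)) does the rest. Your handling of the noncommutative obstacle is also right: one cannot write $\sup_t|\psi_t*f|\le\|\psi\|_1 Mf$ pointwise, but one never needs to, since the order inequalities $f_r\le F$ (strong type) and $ef_re\le\alpha/\|\psi\|_1$ (weak type) are preserved under positive scalar integration in $s$, and the reduction to $\psi\ge 0$, $f\ge 0$ goes through the standard four-term decomposition with only absolute-constant loss.

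One small remark: in part~(i) the statement is for general $f\in L_1(\R^n;S_1)$, not just positive $f$; you should say explicitly that after reducing to positive $\psi$ one also decomposes $f=f_1-f_2+i(f_3-f_4)$ into positive pieces, obtains a projection $e_k$ for each $f_k$ at level $\alpha/4$, and takes $e=\bigwedge_k e_k$. This is routine but worth a sentence.
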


On the proof of Proposition \ref{main estimates},  it suffices to establish the two end-point estimates $p=2$ and $p=1$, since a noncommutative version of Marcinkiewicz interpolation theorem is available (see Lemma
\ref{marcinkiewicz theorem}). 
\begin{lemma}
There exists a constant $C=C(n)<\infty$ such that for any $j\geq1$
we have
\begin{align*}
\|{\sup_{t>0}}^+f_{t,j}\|_2\leq
C2^{(1/2-(n-1)/2)j}\|f\|_2,\;\;\forall f\in L_2(\A).
\end{align*}
\end{lemma}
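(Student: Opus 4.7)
The plan is to mirror the scalar $L_2$ argument, which combines the fundamental theorem of calculus with a Cauchy--Schwarz pairing of $f_{s,j}$ and $s\partial_s f_{s,j}$, but carried out separately for the column and row norms $L_2(\A;\ell_\infty^c)$ and $L_2(\A;\ell_\infty^r)$ and then patched by Lemma \ref{l8c+l8r}. Since $\varphi_j$ (and hence $m_j$) vanishes at the origin for $j\ge 1$, Plancherel gives $f_{s,j}\to 0$ in $L_2(\A)$ as $s\to 0^+$, so
\begin{equation*}
f_{t,j}^* f_{t,j} \;=\; \int_0^t \bigl[(\partial_s f_{s,j})^* f_{s,j} + f_{s,j}^*(\partial_s f_{s,j})\bigr]\,ds.
\end{equation*}
For any scalar $\alpha>0$, positivity of $(\sqrt{\alpha}\,s\partial_s f_{s,j} - \alpha^{-1/2} f_{s,j})^*(\sqrt{\alpha}\,s\partial_s f_{s,j} - \alpha^{-1/2} f_{s,j})$ yields the operator Young inequality
\begin{equation*}
\partial_s[f_{s,j}^* f_{s,j}] \;\le\; \alpha s (\partial_s f_{s,j})^*(\partial_s f_{s,j}) + \alpha^{-1} s^{-1} f_{s,j}^* f_{s,j},
\end{equation*}
so that $\sup_{t>0} f_{t,j}^* f_{t,j}$ is dominated, uniformly in $t$, by
\begin{equation*}
\alpha \int_0^\infty s (\partial_s f_{s,j})^*(\partial_s f_{s,j})\,ds \;+\; \alpha^{-1} \int_0^\infty s^{-1} f_{s,j}^* f_{s,j}\,ds.
\end{equation*}

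Taking the trace on $\A$ reduces each of these positive operators to a scalar integral against $\|\hat f(\xi)\|_{L_2(\M)}^2$ via Plancherel for $L_2(\M)$-valued functions; the Fourier multipliers of $f_{s,j}$ and $s\partial_s f_{s,j}$ are the scalar quantities $\tilde m_j(s|\xi|)$ and $s|\xi|\tilde m_j'(s|\xi|)$. The Bessel asymptotics for $m$ give $|\tilde m_j(u)|,|\tilde m_j'(u)|\lesssim 2^{-(n-1)j/2}$ on the annulus $u\sim 2^j$, and after the substitution $u=s|\xi|$ one finds
\begin{equation*}
\tau\!\int_0^\infty s(\partial_s f_{s,j})^*(\partial_s f_{s,j})\,ds \lesssim 2^{(3-n)j}\|f\|_2^2,\qquad \tau\!\int_0^\infty \frac{f_{s,j}^* f_{s,j}}{s}\,ds \lesssim 2^{(1-n)j}\|f\|_2^2.
\end{equation*}
The choice $\alpha = 2^{-j}$ balances the two contributions and yields $\bigl\|\sup{}^+_t f_{t,j}\bigr\|_{L_2(\A;\ell_\infty^c)}^2 \lesssim 2^{(2-n)j}\|f\|_2^2$. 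Running the same computation with $f_{t,j}f_{t,j}^*$ in place of $f_{t,j}^*f_{t,j}$ provides the matching row estimate, and Lemma \ref{l8c+l8r} combines the two to give the desired $L_2(\A;\ell_\infty)$ bound $2^{(2-n)j/2}\|f\|_2 = 2^{(1/2 - (n-1)/2)j}\|f\|_2$.

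The main obstacle is the correct choice of the scalar weight $\alpha$. In the scalar argument one performs Cauchy--Schwarz in the $(ds/s,dx)$ plane, which amounts to a frequency-dependent weighting of $f_{s,j}$ against $s\partial_s f_{s,j}$; in the operator setting the only tool available is the pointwise inequality $A^*B+B^*A\le \alpha A^*A+\alpha^{-1}B^*B$ with a \emph{scalar} $\alpha$, so the optimization must be carried out before invoking Plancherel. Naive choices such as $\alpha=1$ or $\alpha=s^{1/2}$ produce only the weaker rate $2^{(3-n)j/2}$; recognizing that the annular localization of $m_j$ at $|\eta|\sim 2^j$ forces $\alpha=2^{-j}$ is precisely what restores the sharp rate $2^{(1/2-(n-1)/2)j}$ claimed in the lemma.
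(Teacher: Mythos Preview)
Your proof is correct and follows essentially the same route as the paper: write $f_{t,j}^*f_{t,j}$ via the fundamental theorem of calculus, pair $f_{s,j}$ against $s\partial_s f_{s,j}$, and control the resulting $g$-functions by Plancherel using the Bessel decay of $m_j$ on the annulus $|\eta|\sim 2^j$. The only cosmetic differences are that the paper disposes of the boundary term by sending $\varepsilon\to\infty$ rather than $s\to 0^+$, and that instead of your operator Young inequality with the optimized scalar $\alpha=2^{-j}$ it applies Cauchy--Schwarz \emph{after} taking the trace, obtaining $\|G_j(f)\|_2^{1/2}\|\tilde G_j(f)\|_2^{1/2}$ directly (which is the geometric mean your optimization reproduces); your explicit separation into column and row norms and appeal to Lemma~\ref{l8c+l8r} makes precise a step the paper leaves implicit.
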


\begin{proof}
 We define a function
$$\tilde{m}_j(\xi)=\xi\cdot\triangledown m_j(\xi).$$ Let
$$\tilde{f}_{t,j}(x)=(\hat{f}(\cdot)\tilde{m}_j(t\cdot))^{\vee}(x).$$
And we consider the following two $g$-functions:
$$G_j(f)(x)=\big(\int^{\infty}_0|f_{t,j}(x)|^2\frac{dt}{t}\big)^{\frac12},$$
and
$$\tilde{G}_j(f)(x)=\big(\int^{\infty}_0|\tilde{f}_{t,j}(x)|^2\frac{dt}{t}\big)^{\frac12}.$$
For $f\in\mathscr{S}(\mathbb{R}^n,S^+_{\M})$, the identity
$$s\frac{df_{s,j}}{ds}=\tilde{f}_{s,j}$$
hold for all $j$ and $s$.
By the fundamental theorem of calculus, we deduce that
\begin{align*}
{f_{t,j}(x)}^2&=\int^t_{\varepsilon}\frac{d}{ds}(f_{s,j}(x))^2ds+f_{\varepsilon,j}(x)^2\\
&=\int^t_{\varepsilon}s\frac{df^*_{s,j}(x)}{ds}f_{s,j}(x)f^*_{s,j}(x)s\frac{df_{s,j}(x)}{ds}\frac{ds}{s}+f_{\varepsilon,j}(x)^2\\
&=\int^t_{\varepsilon}\tilde{f}^*_{s,j}(x)f_{s,j}(x)+f^*_{s,j}(x)\tilde{f}_{s,j}(x)\frac{ds}{s}+f_{\varepsilon,j}(x)^2\\
&\leq\int^{\infty}_{0}|\tilde{f}^*_{s,j}(x)f_{s,j}(x)\frac{ds}{s}+\int^{\infty}_0f^*_{s,j}(x)\tilde{f}_{s,j}(x)|\frac{ds}{s}+f_{\varepsilon,j}(x)^2.\\
\end{align*}
Hence by triangle inequality and H\"older inequality, we have
\begin{align*}
\|{\sup_{t}}^+|f_{t,j|^2}\|^{1/2}_1&\leq\|\int^{\infty}_0|\tilde{f}^*_{s,j}(x)f_{s,j}(x)+f^*_{s,j}(x)\tilde{f}_{s,j}(x)|\frac{ds}{s}\|^{1/2}_1\\
&\;\;\;\;\;+\|f_{\varepsilon,j}(x)^2\|^{1/2}_1\\
&\leq2\|\int^{\infty}_0\tilde{f}^*_{s,j}(x)f_{s,j}(x)\frac{ds}{s}\|^{1/2}_1\\
&\;\;\;\;\;+2\|\int^{\infty}_0{f}^*_{s,j}(x)\tilde{f}_{s,j}(x)\frac{ds}{s}\|^{1/2}_1+\|f_{\varepsilon,j}(x)^2\|^{1/2}_1\\
&\leq4\|G_j(f)\|^{\frac 12}_2\|\tilde{G}_j(f)\|^{\frac 12}_2+\|f_{\varepsilon,j}(x)^2\|^{1/2}_1.\\
&\leq8\|G_j(f)\|^{\frac 12}_2\|\tilde{G}_j(f)\|^{\frac 12}_2.\\
\end{align*}
The last inequality is due to the fact that  $\|f_{\varepsilon,j}(x)^2\|^{1/2}_1$ tends to 0 as $\varepsilon$ tends to $\infty$ by Lebesgue dominated theorem.
On the other hand, by the estimates (see e.g. \cite{Gra08})
$$|\hat{d\sigma}(\xi)|+|\triangledown\hat{d\sigma}(\xi)|\leq
C_n(1+|\xi|)^{(1-n)/2},$$ we have
$$\|m(\xi)\|_{\infty}\leq C2^{-j\frac{n-1}{2}}\;\;\mathrm{and}\;\;\|m(\xi)\|_{\infty}\leq C2^{j(1-\frac{n-1}{2})}.$$
Using these elementary estimates and the facts that the functions
$m_j$ and $\tilde{m}_j$ are supported in the annuli around
$|\xi|=2^{j}$, we obtain that these two $g$-functions are
$L_2$-bounded with norms at most a constant multiple of the
quantities $2^{-j\frac{n-1}{2}}$ and $2^{j(1-\frac{n-1}{2})}$
respectively. Hence
$$\|{\sup_{t}}^+f_{t,j}\|_2\leq C2^{j(\frac{1}{2}-\frac{n-1}{2})}\|f\|_2.$$
\end{proof}

\begin{lemma}
There exists a constant $C=C(n)<\infty$ such that for all $j>1$, we
have
$$\|{\sup_{t}}^+f_{t,j}\|_{1,\infty}\leq C2^{j}\|f\|_1,\;\;\forall f\in L_1(\A).$$
More precisely, for all $\lambda>0$, there is a projection
$e\in\mathcal{P}(\A)$ such that
$$\sup_t\|ef_{t,j}e\|_{\infty}\leq\lambda,\;\;\mathrm{and}\;\;\tau\int(e^{\perp})\leq C2^{j}\|f\|_1.$$
\end{lemma}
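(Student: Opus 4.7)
The plan is to dominate the oscillatory kernel $K_{t,j}$ pointwise by $C\,2^j$ times a single scalar radial, radially decreasing kernel, and then invoke the noncommutative Hardy--Littlewood maximal inequality of Lemma \ref{result of cxy}. Write $K_j=\check m_j=\check\varphi_j*d\sigma$, so that $f_{t,j}=K_{t,j}*f$ with $K_{t,j}(y)=t^{-n}K_j(y/t)$. Using that $\check\varphi_j(y)=2^{jn}\eta(2^j y)$ for a fixed Schwartz $\eta$ together with the rapid decay $|\eta(z)|\leq C_N(1+|z|)^{-N}$, we obtain
\[|K_j(x)|\leq C_N\,2^{jn}\int_{S^{n-1}}(1+2^j|x-\omega|)^{-N}\,d\sigma(\omega).\]
A case analysis on $||x|-1|$ — exploiting that the effective domain of integration is a spherical cap of surface measure $\sim 2^{-j(n-1)}$ when $x$ lies within $2^{-j}$ of $S^{n-1}$, and using rapid decay of $\eta$ otherwise — yields the pointwise bound
\[|K_j(x)|\leq C\,2^j\,\Psi(x),\qquad \Psi(x):=(1+|x|)^{-n-1},\]
with $C=C(n)$ independent of $j$. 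By scaling this gives $|K_{t,j}(y)|\leq C\,2^j\,\Psi_t(y)$ for every $t>0$, where $\Psi$ is radial, radially decreasing, and $\|\Psi\|_1<\infty$.

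Since $\varphi_j$ is real and radial and $\hat{d\sigma}$ is real and even, $m_j$ is real even, hence $K_{t,j}$ is a real-valued scalar kernel. Decomposing $K_{t,j}=K_{t,j}^+-K_{t,j}^-$ into positive and negative parts and using $f\geq 0$ (to which we reduce in the standard way), both $K_{t,j}^{\pm}*f$ are positive operators, so we have the two-sided operator inequality
\[-|K_{t,j}|*f\;\leq\; f_{t,j}\;\leq\; |K_{t,j}|*f\;\leq\; C\,2^j\,\Psi_t*f.\]

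Given $\lambda>0$, apply Lemma \ref{result of cxy}(i) to $\Psi$ with threshold $\alpha=\lambda/(C\,2^j)$ to obtain a projection $e\in\mathcal{P}(\A)$ such that $\sup_{t>0}\|e(\Psi_t*f)e\|_\infty\leq\alpha$ and
\[\mathrm{tr}\otimes\int e^{\perp}\;\leq\; C_n\|\Psi\|_1\,\frac{\|f\|_1}{\alpha}\;\leq\; C(n)\,2^{j}\,\frac{\|f\|_1}{\lambda}.\]
Conjugating the two-sided inequality above by $e$ and using that for any self-adjoint $A$ with $-B\leq A\leq B$ and $B\geq 0$ one has $\|A\|_\infty\leq\|B\|_\infty$, we conclude $\sup_{t>0}\|ef_{t,j}e\|_\infty\leq C\,2^j\cdot\alpha=\lambda$, which is the desired weak-type bound.

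The main obstacle is the pointwise kernel estimate. The oscillation in $\hat{d\sigma}$ that powered the $L_2$ estimate in the previous lemma cannot be exploited for a pointwise majorant; one must rely purely on the concentration of $\check\varphi_j$ at scale $2^{-j}$ together with the $(n-1)$-dimensional geometry of the sphere, which yields the crucial gain $2^{jn}\cdot 2^{-j(n-1)}=2^j$, rather than the much worse $2^{j(n+1)/2}$ one would get from the Hausdorff--Young bound on $\|m_j\|_1$. Once this is in hand, the positive/negative decomposition of the kernel and the appeal to Lemma \ref{result of cxy} are routine applications of the noncommutative machinery recalled in Section 2.
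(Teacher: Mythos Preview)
Your argument is correct and follows essentially the same route as the paper: the paper writes $f_{t,j}=K_{j,t}*f$, quotes from \cite[p.~399]{Gra08} the pointwise bound $|K_j(x)|\leq C_M 2^j(1+|x|)^{-M}$ (which is exactly the estimate you sketch via the spherical-cap analysis), and then appeals directly to Lemma~\ref{result of cxy}. Your version is simply more explicit about the positive/negative decomposition of the real kernel and the reduction to $f\ge 0$, which the paper leaves implicit.
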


\begin{proof}
Let $K_j=(\varphi_j)^{\vee}\ast d\sigma=\Phi_{2^{-j}}\ast d\sigma$,
where $\Phi$ is a Schwartz function. Setting
$$K_{j,t}(x)=t^{-n}K_j(t^{-1}x).$$
We have
$$f_{t,j}=K_{j,t}\ast f.$$
On page 399 of \cite{Gra08}, it is shown that for any $M>n$, there
exists $C_{M}<\infty$ such that
$$|K_j(x)|\leq C_M2^j(1+|x|)^{-M}.$$
Then we complete the proof by Lemma \ref{result of cxy}.
\end{proof}

\end{document}